\newcommand{\rem}[1]{}
\newcommand\be{\beta}
\newcommand\ga{\gamma}
\newcommand\de{\delta}
\newcommand\ze{\zeta}
\newcommand\la{\lambda}
\newcommand\rh{\rho}
\newcommand\si{\sigma}
\newcommand\ta{\tau}
\newcommand\ph{\varphi}
\newcommand\om{\omega}
\newcommand\Ga{\Gamma}
\newcommand\Ph{\Phi}
\newcommand\Ps{\Psi}
\newcommand\Om{\Omega}
\newcommand\ie{i.e.\ }
\newcommand\oo{{\infty}}
\renewcommand\o{\circ}
\renewcommand\div{\on{div}}
\newcommand\x{\times}
\newcommand\on{\operatorname}
\newcommand\Ad{\on{Ad}}
\newcommand\Conf{\on{Conf}}
\newcommand\Emb{\on{Emb}}
\newcommand\can{\on{can}}
\newcommand\Rot{\on{Rot}}
\newcommand\Diff{\on{Diff}}
\newcommand\ev{\on{ev}}
\newcommand\pr{\on{pr}}
\newcommand\Ham{\on{Ham}}
\newcommand\KKS{\on{KKS}}
\newcommand\curl{\on{curl}}
\renewcommand\prod{\on{prod}}
\newcommand\g{\mathfrak g}
\newcommand\h{\mathfrak h}
\newcommand\ZZ{\mathbb Z}
\newcommand\NN{\mathbb N}
\newcommand\RR{\mathbb R}
\newcommand\X{\mathfrak X}
\renewcommand\S{\mathcal S}
\renewcommand\O{\mathcal O}
\newcommand\w{\textsf{w}}
\newenvironment{proof}[1][Proof]{\noindent\textbf{#1.} }{\ \rule{0.5em}{0.5em}}
\def\XXint#1#2#3{{\setbox0=\hbox{$#1{#2#3}{\int}$ }
\vcenter{\hbox{$#2#3$ }}\kern-.5\wd0}}
\begin{document}

\newtheorem{theorem}{Theorem}[section]
\newtheorem{definition}[theorem]{Definition}
\newtheorem{lemma}[theorem]{Lemma}
\newtheorem{remark}[theorem]{Remark}
\newtheorem{proposition}[theorem]{Proposition}
\newtheorem{corollary}[theorem]{Corollary}
\newtheorem{example}[theorem]{Example}

\definecolor{burgund}{RGB}{153,0,51}      %burgundy

%%%%%%%%%%%%%%%%%%%%%%%%%%%%%%%%%%%%%%%%%%%%%%
%%%%%%%%%%%%%%%%%%%%%%%%%%%%%%%%%
%%%%%%%%

\title{Pointed vortex loops in ideal 2D fluids}
\author{Ioana Ciuclea$^{1}$ and Cornelia Vizman$^{1}$ }
\addtocounter{footnote}{1}

\footnotetext{Department of Mathematics,
West University of Timi\c soara, 
300223-Timi\c soara, Romania.
\texttt{cornelia.vizman@e-uvt.ro}
\addtocounter{footnote}{1} }

\date{ }
\maketitle
\makeatother
%\begin{center} DRAFT \end{center}
%\maketitle

%|||-------------------text width----------------------|||

%\noindent \textbf{AMS Classification:} 53D20; 37K65; 58D10

%\noindent \textbf{Keywords:} \textcolor{black}{coadjoint orbits, nonlinear Grassmannians, prequantization, characters.}

\begin{abstract}
We study a special kind of singular vorticities in ideal 2D fluids that combine features of point vortices and vortex sheets, namely pointed vortex loops.
We focus on the coadjoint orbits of the area-preserving diffeomorphism group of $\RR^2$ determined by them.
We show that a polarization subgroup consists of diffeomorphisms that preserve the loop as a set,
thus the configuration space is the space of loops that enclose a fixed area, without information on vorticity distribution and  attached points.
\end{abstract}
%\tableofcontents

\section{Introduction}

Euler's equations $\dot {\bf v}+\nabla_{\bf v}{\bf v}=-\nabla p$, $\div {\bf v}=0$, for ${\bf v}$ the fluid velocity and $p$ the pressure,
are  geodesic equations on the volume preserving diffeomorphism group \cite{Arnold,EM}.
In particular the vorticity $\curl {\bf v}$ is confined to a coadjoint orbit of this group.

Among singular vorticities for ideal fluids in 2D, the point vortices
have been much studied (see \cite{aref} for a survey).
In \cite{MW} they are set in the framework of momentum maps:
point vortices evolve in coadjoint orbits of the area preserving diffeomorphism group.
These consist of $k$-tuples of distinct points 
%$x_1,\dots,x_k$ 
in the plane, and the Kostant-Kirillov-Souriau (KKS) symplectic form 
is a linear combination of the area forms weighted with the vorticities (circulations) $\Ga_1,\dots,\Ga_k$.
If some of the $\Ga_i$'s coincide, then the quotient by a permutation subgroup of $k$ elements has to be taken
(Proposition \ref{permit}).
% in the plane (because of invariance of their vorticities). $\{x_1,\dots,x_N\}$ 
%The symmetry group here is the group of compactly supported area-preserving diffeomorphisms of the plane
%(maybe mention Arnold here an put it as the first sentence).

Another kind of singular vorticity is the vortex sheet \cite{batchelor},
e.g. the vortex loop: a pair $(C,\be)$ with $C$ a closed curve and $\be$ its vorticity density (strength), assumed to be nowhere zero \cite{goldin}.
The corresponding coadjoint orbit $\O_a^{\w}$ of the area preserving diffeomorphism group 
consists of curves that enclose a fixed area $a$, endowed with vorticity densities of fixed total vorticity $\w=\int_C\be$ (Theorem \ref{below}).
One can view these spaces as isodrastic leaves in the space of weighted Lagrangian submanifolds, showed in \cite{W90}
to be coadjoint orbits of the Hamiltonian group.
Another way to get the same coadjoint orbits is by using results from \cite{GBV} on coadjoint orbits of the volume preserving diffeomorphism group consisting of vortex sheets,
results that provide also the prequantization condition $a\w\in 2\pi\ZZ$ of the KKS symplectic form (see also \cite{GBVpre}).
Here we obtain the same coadjoint orbit $\O_a^{\w}$ by symplectic reduction on one of the legs of the ideal fluid dual pair
due to Marsden and Weinstein \cite{MW}.
%This leg is the momentum map given by the enclosed area.
 
In this article we study a different type of singular vorticities, that combine features of point vortices and vortex loops, called
pointed vortex loops.
A complete set of invariants for the natural action of the area preserving diffeomorphism group
on pointed vortex loops are the enclosed area $a$ and the $k$ partial vorticities
$\w_i=\int_{x_i}^{x_{i+1}}\be$ along the loop, with total vorticity $\w_1+\dots \w_k=\w$.
The corresponding coadjoint orbit can be identified with the space of embeddings 
that enclose a fixed area, possibly factorized by a permutation subgroup of $k$ elements, depending on the rotational symmetry of the data, 
$(\Ga_i)$ and $(\w_i)$ (Theorem \ref{main}).
The KKS symplectic form is exact.
%, which depend on the point vorticities $\Ga_i$ and on the partial vorticities $\w_i$, are all exact.
At the pointed vortex loop $(C,\be,(x_i))$ it can be written in canonical form, with a well chosen non-degenerate pairing between
$C^\oo(C)$ and its subspace $C_0^\oo(C)$ of zero integral functions (with respect to the volume form on $C$ induced by the Euclidean metric).
%We notice that the coadjoint orbit of pointed vortex loops can be embedded as a symplectic submanifold in the product of other two coadjoint orbits: of vortex loops and of point vortices.

%\paragraph{Polarizations.}
In \cite{goldin} is shown that, within the framework of the 2D Euler equations, point vortices cannot be consistently quantized, 
while vortex loops admit natural polarizations. 
The resulting configuration space is the space of loops enclosing a fixed area, without information about the vorticity distribution.
In the same article the authors attach vortex dipoles to the point vortices, 
so that the additional degrees of freedom allow for a polarization.
We show that pointed vortex loops also have natural polarizations.
A polarization subgroup consists of diffeomorphisms that preserve the loop as a set,
so the configuration space is the space of loops that enclose a fixed area,
without information about the vorticity distribution and the attached points.
%It is the same as for coadjoint orbits of vortex loops without added points.
%Thus we don't have a quantum theory of pointed vortex loops 
%from quantizing singular point vortices on the loop.
%For this one has to quantize  point vortex dipoles on the loop, as in \cite{GBGVpre}.

\paragraph{Acknowledgements.} 
We would like to thank Gerald Goldin and Francois Gay-Balmaz for interesting (thought-provoking) discussions on the subject and useful suggestions.
We also thank Yael Karshon for her question that became Remark \ref{yael}.
Both authors were supported by a grant of the Romanian Ministry of
Education and Research, CNCS-UEFISCDI, project number PN-III-P4-ID-PCE2020-2888, within PNCDI III. 
%%%%%%%%%%%%%%%%%%%%%%%%

\section{Singular vorticities}

In this section we describe coadjoint orbits for two types of singular vorticities for ideal fluids in 2D: point vortices and vortex loops.

We denote by $\om$ the canonical area form on $\RR^2$ (a symplectic form) and by $G$ the group of compactly supported area preserving diffeomorphisms, which coincides with $\Ham_c(\RR^2)$, the compactly supported Hamiltonian diffeomorphism group.
Its Lie algebra, denoted by $\g$, consists of compactly supported divergence free vector fields
(the same as compactly supported Hamiltonian vector fields in this case). 
It can be identified with the Lie algebra $C_c^\oo(\RR^2)$ and its 
%(endowed with the Poisson bracket), hence its
 dual $\g^*$ with the space of distributions.
% (generalized functions).

Euler's equations in the plane, $\dot {\bf v}+\nabla_{\bf v}{\bf v}=-\nabla p$, $\div {\bf v}=0$, for ${\bf v}$ the fluid velocity and $p$ the pressure,
are the geodesic equations on the group $G$ endowed with the right invariant $L^2$ metric \cite{Arnold}.
Here ${\bf v}$ lives in $\g$, while the vorticity $\curl {\bf v}$ 
is confined to a coadjoint orbit in $\g^*$.
Beside the smooth (regular) vorticities, non-smooth (singular) vorticities were widely considered.
These are confined to coadjoint orbits too  (see for instance \cite{K}).
In 2D one has point vortices, with zero dimensional support, and vortex loops, with 1-dimensional support (\ie vortex sheets).

\paragraph{Point vortices.}
The configuration space of $k$ ordered points in the plane is
\begin{equation}\label{conf}
{\Conf_k}= (\RR^2)^k\setminus\Delta_k,
\end{equation}
where $\Delta_k = \{(x_1,...,x_k) \in (\mathbb R^2)^k : x_i=x_j \text{ for some } i\neq j\}$ is the fat diagonal.
The non-zero vorticities (circulations) $\Ga_i\in\RR$ for $i=1,\dots k$, define a symplectic form on $\Conf_k$  
\begin{equation}\label{omsum}
\om^\Ga=\sum_{i=1}^k\Ga_i p_i^*\om,\quad p_i:\Conf_k\to\RR^2.
\end{equation}
The natural action of  $\Ham_c(\RR^2)$  
is transitive and Hamiltonian with equivariant momentum map \cite{MW}
\begin{equation}\label{jpoint}
J:\Conf_k\to C_c^\oo(\RR^2)^*,\quad J(x_1,\dots,x_k)=\sum_{i=1}^k\Ga_i\de_{x_i}.
\end{equation}
In the generic case, when all vorticities are distinct, the map $J$ is one-to-one onto a coadjoint orbit and the KKS symplectic form on the coadjoint orbit satisfies 
$J^*\om_{KKS} = \om^\Ga$.
If some of the vorticities coincide, let's say $\Ga_{i_1}=\dots =\Ga_{i_{k'}}$, then the permutation group $\S_{k'}$ of the set of indices $\{i_1,\dots,i_{k'}\}$ acts on the configuration space while preserving the symplectic form $\om^\Ga$, 
and we have to quotient out this action to ensure the injectivity of $J$.
In the most symmetric case, when all vorticities are equal, factorization by the whole permutation group $\S_k$ is needed,
and the coadjoint orbit is the configuration space of $k$ unordered points in the plane.
The general case is described in the following proposition:
% (mathematical folklore?):

\begin{proposition}\label{permit}
Let $\Ga_1,\dots,\Ga_k$ be fixed vorticities,
let $K$ be the partition of $\{1,\dots,k\}$ that corresponds to equal vorticities,
%: let's say $m$ parts of cardinalities $k_1,\dots,k_m$.
and let 
\[
\S_K:=\S_{k_1}\x\dots\x\S_{k_m}
\] 
be the subgroup of $\S_k$ that consists of all permutations that preserve the partition $K$.
Then the action of $\S_K$ on the configuration space $\Conf_k$ preserves the symplectic form $\om^\Ga$ in \eqref{omsum}
and the quotient space $\Conf_k/\S_K$ inherits a symplectic form ${\bar\om}^\Ga$.
The momentum map for the Hamiltonian action of $\Ham_c(\RR^2)$,
\[
\bar J:\Conf_k/\S_K\to C^\oo_c(\RR^2)^*,\quad\bar J([x_1,\dots,x_k])=\sum_{i=1}^k\Ga_i\de_{x_i},
\]
is one-to-one onto a coadjoint orbit and the KKS symplectic form on the coadjoint orbit satisfies 
${\bar J}^*\om_{KKS} = {\bar\om}^\Ga$.
\end{proposition}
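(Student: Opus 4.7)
The strategy is to reduce the general case to the generic one (all vorticities distinct) already handled in the text, by quotienting out the obvious permutation symmetry and then matching the induced symplectic form with the pullback of the KKS form.

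First I would verify that $\S_K$ acts on $\Conf_k$ preserving $\om^\Ga$. With the label-permutation action $(\sigma\cdot x)_i=x_{\sigma^{-1}(i)}$ one has $p_i\circ\sigma=p_{\sigma^{-1}(i)}$, so
\[
\sigma^*\om^\Ga=\sum_{i=1}^k\Ga_i\,p_{\sigma^{-1}(i)}^*\om=\sum_{j=1}^k\Ga_{\sigma(j)}\,p_j^*\om=\om^\Ga,
\]
where the last equality uses $\Ga_{\sigma(j)}=\Ga_j$ for $\sigma\in\S_K$. Since the points in a configuration are pairwise distinct, only $\sigma=\id$ can fix a tuple, so the action of the finite group $\S_K$ is free. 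Hence $\Conf_k/\S_K$ is a smooth manifold and $\om^\Ga$ descends to a symplectic form $\bar\om^\Ga$.

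Next I would establish well-definedness and injectivity of $\bar J$. Well-definedness is immediate, since $\sum_i\Ga_i\de_{x_i}$ depends only on the unordered multiset of weighted points. For injectivity, suppose $\bar J([x])=\bar J([y])$. Comparing supports and coefficients of the two distributions produces, using the distinctness of the entries in each tuple, a unique permutation $\sigma\in\S_k$ with $y_i=x_{\sigma(i)}$ and $\Ga_i=\Ga_{\sigma(i)}$. The latter relation forces $\sigma$ to preserve the partition $K$, hence $\sigma\in\S_K$ and $[x]=[y]$.

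Finally I would identify the image with a coadjoint orbit and compare the symplectic structures. Transitivity of $\Ham_c(\RR^2)$ on $\Conf_k$ from \cite{MW}, together with the equivariance of the momentum map \eqref{jpoint}, implies that the image of $\bar J$ is a single coadjoint orbit in $\g^*$. Writing $\pi:\Conf_k\to\Conf_k/\S_K$ for the quotient projection, the identities $\bar J\circ\pi=J$, $J^*\om_{KKS}=\om^\Ga$, and $\pi^*\bar\om^\Ga=\om^\Ga$ combine to give $\pi^*(\bar J^*\om_{KKS})=\pi^*\bar\om^\Ga$, and surjectivity of the submersion $\pi$ yields $\bar J^*\om_{KKS}=\bar\om^\Ga$. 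The only genuinely new step compared to the generic case is the injectivity argument, where one must reconstruct the permutation from the distribution and verify that it respects the equal-vorticity partition; the rest is essentially bookkeeping with a standard symplectic-quotient observation.
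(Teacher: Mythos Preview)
The paper does not supply a formal proof of this proposition; it is stated as a summary of the preceding discussion, which only sketches why one must quotient by the permutation subgroup $\S_K$ when vorticities coincide. Your argument is correct and fills in precisely the details the paper leaves implicit: the invariance computation $\sigma^*\om^\Ga=\om^\Ga$, freeness of the $\S_K$ action on distinct tuples, the support-and-coefficient argument for injectivity of $\bar J$, and the descent of the symplectic identity through the covering $\pi$.

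One minor remark on presentation: rather than pulling the identity $J^*\om_{\KKS}=\om^\Ga$ back through $\pi$, you could alternatively apply directly to $(\Conf_k/\S_K,\bar\om^\Ga)$ the folklore result recorded later in the paper as Proposition~\ref{stefan} (transitive Hamiltonian action with injective equivariant momentum map $\Rightarrow$ symplectomorphism onto a coadjoint orbit). This is the route the paper takes for the analogous Theorem~\ref{main}, and it avoids the need to first establish $J^*\om_{\KKS}=\om^\Ga$ upstairs where $J$ is not injective. Both routes are equivalent and equally short here.
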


\paragraph{Vortex loops.}
We call a vortex loop any weighted curve $(C,\be)$, with $C$ a closed curve in the plane endowed with a nowhere zero vorticity density $\be\in\Om^1(C)$,
assumed to be a volume form.
We consider the orientation of the curve induced by $\be$, so the total vorticity $\w=\int_C\be$ is positive.
This a special type of vortex sheet in two dimensions \cite{batchelor}. A formulation within the groupoid framework can be found in \cite{ik}.

Each such pair $(C,\be)$ corresponds to a unique non-smooth element of the dual $\g^*$, 
\begin{equation}\label{cbe}
\langle(C,\be),X_h\rangle=\int_Ch\be, \quad h\in C^\oo_c(\RR^2),
\end{equation}
where the compactly supported Hamiltonian vector field $X_h\in\g$ is identified with its unique compactly supported hamiltonian function $h$.
The enclosed area and the total vorticity are a complete set of invariants of the coadjoint action
$\Ad^*_\ph(C,\be)=(\ph(C),\ph_*\be)$ for all $\ph\in G$ (see Theorem \ref{below} below).

%\paragraph{Dual pair.}
The ideal fluid dual pair, due to Marsden and Weinstein \cite{MW} (and further studied in \cite{GBV12}), has a low dimensional version 
that captures vortex loops, which we present below. The manifold $\Emb(S^1,\RR^2)$ 
carries a  symplectic form naturally defined with the volume form $\mu=\frac{\w}{2\pi}dt$
 on $S^1$ and the symplectic form $\om=d\nu$ on $\RR^2$:
\begin{equation}\label{omf}
\Om_f(u_f,v_f)=\int_{S^1}\om(u_f,v_f)\mu,\quad u_f,v_f:S^1\to\RR^2.
\end{equation}
The commuting Hamiltonian actions of $\Ham_c(\RR^2)$ from the left and $\Rot(S^1)$ from the right
admit equivariant momentum maps:
$J_L(f)=f_*\mu\text{ and }J_R(f)=\int_{S^1}f^*\nu$ the area enclosed by the image of $f$.
%The right momentum map is the area, while the left momentum map applied to $f$ is the distribution defined by $(C,\be)=(f(S^1),f_*\mu)$ as in \eqref{cbe}.
They form a symplectic dual pair
\[
\g^*\stackrel{J_L}{\longleftarrow}(\Emb(S^1,\RR^2),\Om)\stackrel{J_R}{\longrightarrow}\RR,
\] 
which means that the distributions $\ker T J_L$ and $\ker T J_R$ 
are symplectic orthogonal complements of one another \cite{W83}.
A slightly stronger fact holds here: each of the two groups acts transitively on level sets of the other group's momentum map.
Thus the orbits are symplectic complements of one another (this type of actions are called mutually completely orthogonal in \cite{LM}).

For such dual pairs of momentum maps, a general principle says that symplectic reduction on one leg yields coadjoint orbits for the other group.
Symplectic reduction at zero in the ideal fluid dual pair has been already used in \cite{GBV19} to obtain
coadjoint orbits of the Hamiltonian group, orbits that consist of weighted isotropic submanifolds.
%(see also \cite{L}).
The symplectically reduced manifold for the right action at  non-zero $a\in\RR$, which is
\begin{equation}\label{cong}
(\Emb_a(S^1,\RR^2)/\Rot(S^1),\Om_a^{\w})
\end{equation}
can be realized via $J_L$ as a coadjoint orbit of $G$.
It is in one-to-one correspondence  with the space
$\O_a^{\w}$ of all vortex loops $(C,\be)$ with total vorticity  $\int_C\be=\w$ and the enclosed area $\int_C\nu=a$. 
More precisely,
\begin{equation}\label{psi}
\Ps:\Emb_a(S^1,\RR^2)/\Rot(S^1)\to\O_a^{\w},\quad \Ps([f])=(f(S^1),f_*\mu)
\end{equation}
is a bijection that intertwines the natural $G$ action $\ph\cdot[f]=[\ph\o f]$ with the $\Ad^*$ action.

%\paragraph{Isodrasts.}
The same result can be deduced by adapting to the case of curves in the plane the results from 
%In the language of Weinstein 
\cite{W90} (see also \cite{L}), where isodrastic leaves in the space of weighted Lagrangian submanifolds are shown to be
coadjoint orbits of the Hamiltonian group,
The name isodrast refers to the same action: 
the action integral around loops in Lagrangian submanifolds is preserved under isodrastic deformations.
In this 2D setting, 
%the isodrasts of the space of Lagrangian embeddings $\Emb(S^1,\RR^2)$ are  $\Emb_a(S^1,\RR^2)$, $a\in\RR$, hence 
the isodrasts of the space of weighted 1-dimensional Lagrangian submanifolds of $\RR^2$ with total weight $\w$ are the coadjoint orbits $\O_a^{\w}$
with $a\in\RR$.

A third way to arrive to these coadjoint orbits is by adapting to dimension two the results from \cite{GBV} on vortex sheets, \ie
singular vorticities of codimension one (see also \cite{GBVpre}).
One also gets which of the coadjoint orbits $(\O_a^{\w},\Om_a^{\w})$ in \eqref{cong} are prequantizable:
those that satisfy  the Onsager-Feynman prequantization condition $a\w\in 2\pi\ZZ$ (see also \cite{goldin}).
%We get a linear family of symplectic structures $\Om_a^{\w}$ on the same manifold $\Emb_a(S^1,\RR^2)/\Rot(S^1)$  have as KKS forms integer multiples of this symplectic form.

We summarize these facts in the following theorem:

\begin{theorem}\label{below}{\rm\cite{W90,L,GBV}}
The space $\O_a^{\w}$ of vortex loops with enclosed area $a$ and total weight $\w$, identified with $\Emb_a(S^1,\RR^2)/\Rot(S^1)$
% via $\Ps$ in \eqref{psi}, 
and endowed with reduced symplectic form $\Om_a^{\w}$, has a natural Hamiltonian $G$ action.
The momentum map
\begin{equation}\label{jloop}
J:\Emb_a(S^1,\RR^2)/\Rot(S^1)\to\g^*, \quad\langle J([f]),X_h\rangle=\frac{\w}{2\pi}\int_{S^1}h(f(t))dt
\end{equation}
is one-to-one onto a coadjoint orbit and the KKS symplectic form on the coadjoint orbit satisfies 
$J^*\om_{KKS} = \Om_a^{\w}$. The coadjoint orbit  $\O_a^{\w}$ is prequantizable if and only if $aw\in 2\pi\ZZ$.
\end{theorem}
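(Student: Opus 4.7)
The plan is to derive all three assertions directly from the dual-pair set-up in the excerpt, rather than from the alternative routes via isodrasts or codimension-one vortex sheets. First I would check the hypotheses of Marsden--Weinstein reduction at level $a$ for the right $\Rot(S^1)$-action on $(\Emb(S^1,\RR^2),\Om)$: the value $a$ is regular because the differential $T_f J_R(u_f)=\int_{S^1}\om(\dot f,u_f)\,dt$ is surjective (one may freely prescribe the normal component of $u_f$ along $f(S^1)$), and $\Rot(S^1)$ acts freely on $J_R^{-1}(a)$ since no non-trivial rotation fixes an embedding. This already produces the reduced symplectic manifold $(\Emb_a(S^1,\RR^2)/\Rot(S^1),\Om_a^{\w})$.

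Next I would show that $\Ps$ in \eqref{psi} is a $G$-equivariant bijection onto $\O_a^{\w}$. Injectivity reduces to the observation that if two embeddings $f,g$ share the same image curve and the same push-forward of $\mu=(\w/2\pi)dt$, then $g=f\o\si$ with $\si$ an orientation-preserving diffeomorphism of $S^1$ that preserves the constant volume $\mu$, hence a rotation. Surjectivity follows by parametrizing any $(C,\be)\in\O_a^{\w}$ by the (up-to-rotation unique) $f:S^1\to C$ with $f^*\be=\mu$. The left $G$-action $f\mapsto\ph\o f$ preserves $\Om$ (because $\ph^*\om=\om$) and the level set $J_R^{-1}(a)$ (because $\ph^*\nu-\nu$ is closed and exact on $\RR^2$, so its integral around any loop vanishes); it therefore descends Hamiltonially to the reduction, with momentum map obtained by restricting and pushing down $J_L(f)=f_*\mu$, which when paired with $h\in C_c^\oo(\RR^2)$ recovers formula \eqref{jloop}.

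To conclude that $J$ is one-to-one onto a coadjoint orbit with $J^*\om_{KKS}=\Om_a^{\w}$, I would establish transitivity of the $G$-action on $\O_a^{\w}$ by a Moser argument: first use an ambient compactly supported area-preserving isotopy to take $C_0$ to $C_1$ (possible because the enclosed areas agree), then compose with an area-preserving diffeomorphism fixing $C_1$ set-wise that transports the density $\be_0$ to $\be_1$ (obtained by the one-dimensional Moser trick on the curve and extended through a tubular neighbourhood). Combined with the injectivity just noted, this makes $J$ an equivariant bijection onto a single coadjoint orbit; the pull-back identity then follows from the uniqueness of a $G$-invariant symplectic form on a transitive $G$-space compatible with a prescribed equivariant momentum map.

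The prequantization criterion is the main obstacle, since it requires global integrality of $[\Om_a^{\w}]/(2\pi)$ rather than a pointwise check. I would exhibit an explicit $2$-sphere generator of the relevant second homotopy group of $\Emb_a(S^1,\RR^2)/\Rot(S^1)$---for instance, coupling the $\Rot(S^1)$-phase to a one-parameter family of rigid translations of a fixed loop along a closed curve in $\RR^2$---and reduce its period via Stokes' theorem to the product $a\w$. The same computation is carried out for singular vorticities of codimension one in \cite{GBV,GBVpre}; my plan is to specialize it to the $2$D setting, extracting the Onsager--Feynman condition $a\w\in 2\pi\ZZ$ as the integrality of $[\Om_a^{\w}]/(2\pi)$.
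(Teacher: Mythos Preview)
The paper does not supply its own detailed proof of this theorem: it is stated as a summary of results from \cite{W90,L,GBV,GBVpre}, preceded only by the sketch of the dual-pair route (reduce the right $\Rot(S^1)$-action at level $a$, identify the quotient with $\O_a^{\w}$ via $\Ps$, and read off the momentum map from $J_L$), with the prequantization condition simply attributed to \cite{GBV,GBVpre,goldin}. Your proposal is a correct fleshing-out of exactly that sketch, so it is in line with the paper's approach.

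Two minor points of comparison. For transitivity, the paper (in its treatment of the pointed case) works upstairs on $\Emb_a(S^1,\RR^2)$ via the infinitesimal-transitivity argument of Proposition~\ref{transitivity} and then descends; your direct Moser argument on $\O_a^{\w}$ is equivalent but slightly different in spirit. For the pull-back identity $J^*\om_{\KKS}=\Om_a^{\w}$, the paper later invokes the folklore Proposition~\ref{stefan}; your uniqueness argument is the same principle. The prequantization computation you outline is indeed the specialization of the codimension-one calculation in \cite{GBV,GBVpre}, which is precisely what the paper cites rather than reproduces.
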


%%%%%%%%%%%%%%%

\section{Pointed vortex loops}

We call pointed vortex loop  a triple $(C,\beta,(x_i))$ that consists of a vortex loop  $(C,\be)$ to which we attach $k$ point vortices
 $x_i\in C$.
 In this section we study singular vorticities in $\g^* = C_c^\infty(\mathbb R^2)^*$ induced by pointed vortex loops:
\begin{equation}\label{cbx}
X_h\mapsto\int_Ch\beta + \sum_{i=1}^k\Gamma_ih(x_i).
\end{equation}

\paragraph{The invariants of the $G$ action.}
Beside the total vorticity $\w = \int_C\beta>0$, the point vortices introduce $k$ additional partial vorticities along the loop:
$$
\w_i = \int_{x_i}^{x_i+1}\beta \quad \text{ with } \quad \w = \w_1 + ... + \w_k.
$$
We only consider consecutive points on the curve, while respecting the orientation induced by $\be$, which means that all $\w_i>0$.
The natural action of $G=\Ham_c(\RR^2)$,
% given by the pushforward,
\begin{equation}\label{adcx}
\varphi\cdot((C,\beta, ( x_i))=(\varphi(C), \varphi_*\beta,(\varphi( x_i)),
\end{equation}
leaves invariant the area $a$ enclosed by the curve, as well as all partial vorticities $\w_i $.
We denote by $\O_a^{\bar\w}$ the space of all pointed vortex loops $(C,\be,(x_i))$ with 
$a=\int_C\nu$ and $\w_i = \int_{x_i}^{x_i+1}\beta$, $i=1,\dots,k$.

\begin{lemma}\label{pfi}
Consider the manifold $\Emb_a(S^1,\RR^2)$  of embeddings 
that enclose a fixed area $a$, and fix a collection of positive numbers $\bar\w=(\w_i)$ with $\w=\w_1+\dots+\w_k$. 
Then the map
\begin{equation}\label{fie}
\Phi: \Emb_a(S^1, \RR^2) \rightarrow \mathcal O_a^{\bar\w}, \quad\Phi(f) = (f(S^1), f_*\mu, (f( t_i)))
\end{equation}
is a bijection, 
where $\mu:=\frac{\w}{2\pi}dt$ and, for $i=1,\dots,k$,
\begin{equation}\label{ti}
t_i:=\frac{\w_1+\dots+\w_{i-1}}{\w}2\pi.
\end{equation}
\end{lemma}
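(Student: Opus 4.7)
The plan is to establish that $\Phi$ is well-defined, surjective, and injective by exploiting a canonical ``vorticity parametrization'' of the curve. The key observation is that the parameters $t_i$ in \eqref{ti} are precisely defined so that the push-forward $f_*\mu$ records the partial vorticities correctly.

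First I would check that $\Phi(f)$ really lands in $\O_a^{\bar\w}$. The image $f(S^1)$ encloses area $a$ by hypothesis, and $f_*\mu$ is a nowhere-vanishing volume form on $f(S^1)$ with total mass $\w$. Moreover, for each $i$ (indices cyclic mod $k$),
\[
\int_{f(t_i)}^{f(t_{i+1})} f_*\mu = \int_{t_i}^{t_{i+1}}\mu = \frac{\w}{2\pi}(t_{i+1}-t_i) = \w_i,
\]
which is exactly the partial-vorticity condition defining $\O_a^{\bar\w}$.

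For surjectivity, given $(C,\be,(x_i))\in\O_a^{\bar\w}$, I would construct $f$ as follows. Since $\be$ is a positive volume form on $C$ with $\int_C\be=\w$, the prescription ``$\be$-length from $x_1$'' produces a unique orientation-preserving diffeomorphism $f:S^1\to C$ satisfying $f^*\be=\mu$ and $f(0)=x_1$. Then $f_*\mu=\be$ by construction. To see that $f(t_i)=x_i$ for all $i$, observe
\[
\int_{f(0)}^{f(t_i)}\be = \int_0^{t_i}\mu = \frac{\w}{2\pi}\,t_i = \w_1+\dots+\w_{i-1} = \int_{x_1}^{x_i}\be,
\]
and monotonicity of the $\be$-primitive along $C$ then forces $f(t_i)=x_i$. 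Hence $\Phi(f)=(C,\be,(x_i))$.

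For injectivity, suppose $\Phi(f)=\Phi(g)$. Since $f(S^1)=g(S^1)=C$ with matching orientations (both induced by $f_*\mu=g_*\mu=\be$), we may write $f=g\circ\psi$ for some $\psi\in\Diff(S^1)$. The equality $f_*\mu=g_*\mu$ then yields $\psi_*\mu=\mu$, so $\psi$ preserves $dt$ and must be a rotation of $S^1$. The further condition $f(0)=g(0)$, together with the injectivity of $g$, pins down $\psi(0)=0$, and therefore $\psi=\id$.

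I expect the only nontrivial step to be the surjectivity argument, since this is the unique place where the specific choice of the nodes $t_i$ enters decisively: one must verify simultaneously that the vorticity parametrization $f$ respects $f^*\be=\mu$ and that the $t_i$ hit the prescribed marked points. Both well-definedness and injectivity are essentially bookkeeping once the vorticity parametrization is in hand.
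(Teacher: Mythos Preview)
Your proof is correct and follows essentially the same route as the paper: well-definedness via the partial-vorticity computation, surjectivity by choosing the parametrization with $f_*\mu=\be$ normalized so that $f(0)=x_1$ and then matching the remaining $t_i$ to $x_i$ via the integral identity $\int_{x_1}^{x_i}\be=\w_1+\dots+\w_{i-1}=\int_0^{t_i}\mu$, and injectivity by observing that two such parametrizations differ by a rotation which is pinned to the identity by $t_1=0$. The only cosmetic difference is that you build the surjectivity parametrization directly from the $\be$-arclength, whereas the paper first picks any $f$ with $f_*\mu=\be$ and then composes with a rotation.
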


\begin{proof}
The map $\Phi$ is well defined since 
\[
\int_{f(S^1)}f_*\mu=\frac{\w}{2\pi}\int_{0}^{2\pi}dt=\w,\quad\int_{f(t_i)}^{f(t_{i+1})}f_*\mu=\frac{\w}{2\pi}\int_{t_i}^{t_{i+1}}dt=\w_i.
\]
It is injective because $\Phi(f_1) = \Phi(f_2)$ implies that the two parametrizations differ by a rigid rotation:
$f_2 = f_1\circ R_{\tau}$. From $f_1(0) = f_2(0)$ (since $t_1=0$) we infer that the rotation $R_\ta$ must be the identity map, so $f_1=f_2$.

For the surjectivity we consider an arbitrary pointed vortex loop  $(C,\beta,(x_i)) \in \O_a^{\bar\w}$.
There is a parametrization $f\in \Emb_a(S^1, \RR^2)$ such that $C = f(S^1)$ and $\beta = f_*\mu$.
We use the freedom to compose the embedding $f$ with a rigid rotation to get $x_1=f(0) = f(t_1)$. It remains to be shown that $x_i=f(t_i)$ for $i=2,\dots,k$. This follows from
\[
\int_{x_1}^{x_i}\be=\w_1+\dots \w_{i-1}=\frac{\w}{2\pi}t_i=\int_0^{t_i}\mu
%=\int_{f(0)}^{f(t_i)}f_*\mu
=\int_{x_1}^{f(t_i)}\be.
\]
Thus $\Ph(f)=(C,\be,(x_i))$.
\end{proof}

The bijection $\Ph$ in \eqref{fie} intertwines the $G$ action $\ph\cdot f=\ph\o f$ with the natural action \eqref{adcx}.
It is well known  that the  action of $\Ham_c(\RR^2)$ on $\Emb_a(S^1, \RR^2)$ is transitive (see \cite{GBV} or Proposition
\ref{transitivity} from the Appendix). 
With the Lemma \ref{pfi} we obtain that: 

\begin{proposition}
The space of pointed vortex loops $\O_a^{\bar\w}$ is a $\Ham_c(\RR^2)$ orbit,
thus the enclosed area $a$ and the partial vorticities $\bar\w$ are a complete set of invariants.
\end{proposition}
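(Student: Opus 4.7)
The plan is to reduce the statement to two facts already in hand: the bijection $\Phi$ of Lemma \ref{pfi}, and the known transitivity of the $\Ham_c(\RR^2)$-action on $\Emb_a(S^1,\RR^2)$ referenced from \cite{GBV} (stated as Proposition \ref{transitivity} in the Appendix). Once $\Phi$ is shown to be $G$-equivariant for the two natural actions, transitivity transports from the source to the target, and completeness of the invariants follows from the definition of $\O_a^{\bar\w}$.

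First I would check the equivariance explicitly. The action on the source is $\ph\cdot f=\ph\o f$, and the natural action on pointed vortex loops is \eqref{adcx}. A direct computation gives
\[
\Phi(\ph\o f)=\big((\ph\o f)(S^1),(\ph\o f)_*\mu,((\ph\o f)(t_i))\big)=\big(\ph(f(S^1)),\ph_*(f_*\mu),(\ph(f(t_i)))\big),
\]
which is exactly $\ph\cdot\Phi(f)$. The base points $t_i$ defined in \eqref{ti} depend only on the partial vorticities $\bar\w$, so they are the same for $f$ and for $\ph\o f$; this is what makes the equivariance work.

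Next I would transport transitivity: given two pointed vortex loops $(C_j,\be_j,(x_{j,i}))\in\O_a^{\bar\w}$ for $j=1,2$, Lemma \ref{pfi} produces embeddings $f_j\in\Emb_a(S^1,\RR^2)$ with $\Phi(f_j)=(C_j,\be_j,(x_{j,i}))$. By the transitivity of $\Ham_c(\RR^2)$ on $\Emb_a(S^1,\RR^2)$, there exists $\ph\in G$ with $\ph\o f_1=f_2$. Applying $\Phi$ and using equivariance yields $\ph\cdot(C_1,\be_1,(x_{1,i}))=(C_2,\be_2,(x_{2,i}))$, so $\O_a^{\bar\w}$ is a single orbit. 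Since $a$ and $\bar\w$ are $G$-invariant by the discussion preceding the proposition, and two pointed vortex loops with the same $(a,\bar\w)$ lie in the same orbit, this pair of data is a complete set of invariants.

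The only potential subtlety I anticipate is conceptual rather than computational: one needs to be sure that the marked points $t_i$ chosen in Lemma \ref{pfi} are canonically associated to $\bar\w$ (and not to the individual embedding), so that both $f$ and $\ph\o f$ are evaluated at the \emph{same} $t_i$'s; this is immediate from \eqref{ti}. Once this observation is in place, no further work is required.
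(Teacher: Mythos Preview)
Your proposal is correct and follows essentially the same route as the paper: the paper's proof is the sentence preceding the proposition, which observes that $\Phi$ intertwines the two $G$-actions and then invokes the transitivity of $\Ham_c(\RR^2)$ on $\Emb_a(S^1,\RR^2)$ (Proposition~\ref{transitivity}) together with Lemma~\ref{pfi}. Your write-up simply unpacks the equivariance check and the transport of transitivity more explicitly, which is fine.
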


%\todo{1}
\begin{remark}\rm
Another expression of the tangent space to the orbit of pointed vortex loops $\O_a^{\bar\w}$, 
in terms of triples (inspired by the vortex sheet approach in \cite{GBV})
uses the orthogonal decomposition of vector fields along the curve (with respect to the ambient Euclidean metric).
The infinitesimal generator to $X_h \in \g$, which is the vector field $f\mapsto X_h\o f$ on the space of embeddings,
becomes
\[
\ze_{X_h}(C,\be,(x_i))=(X_h|_C^\perp,di_{X_h|_C^\top}\be,(X_h(x_i))).
\]
Here the tangent space at $(C,\be,(x_i))$ to its $G$ orbit $\O_a^{\bar\w}$ is identified with
\begin{align}\label{trip}
\{(u_C,d\la,(v_i))\in\Ga(TC^\perp)\x dC^\oo(C)\x\RR^{2k}: i_{u_C}\om|_{TC}\in dC^\oo(C),&\ v_i^\perp=u_C(x_i),\nonumber\\
&\be(v_i^\top)-\la(x_i)={\rm ct.}\}.
\end{align}
% $(u_C,d\la,(v_i))$ at $(C,\be,(x_i))$ is subject to additional constraints:
The first condition comes from the constant enclosed area.
The second condition doesn't let the points $x_i$ to leave the curve.
The requirement that the local vorticity $\w_i=\int_{x_i}^{x_{i+1}}\be$ is constant, 
implies the third condition, that $\be(v_i^\top)-\la(x_i)$ is constant for all $i=1,\dots,k$.
This reflects the fact that the move of one of the points on the curve predicts the movement of all the other $k-1$ points.
\end{remark}

%%%%%%%%%%%%%

\paragraph{The symplectic form.}
Our plan is to  show that \eqref{cbx} is a momentum map for a well chosen symplectic form on the orbit $\O_a^{\bar \w}$.
Let $\ev_{t_i}:\Emb_a(S^1,\RR^2)\to\RR^2$ denote the evaluation map at the points $t_i\in S^1$ given by \eqref{ti},
so the collection of maps $(\ev_{t_i})$
%$(\ev_{t_i})(f)=(f(t_i))\in\Conf_k$, 
takes values in the configuration space $\Conf_k$.
%of $k$ points in the plane.
On $\Emb_a(S^1,\RR^2)$ we consider the 2-form 
\begin{equation}\label{short}
\Om^{\Ga\bar\w}=\Om+(\ev_{t_i})^*\om^\Ga=\Om+\sum_{i=1}^k\Ga_i\ev_{t_i}^*\om,
\end{equation}
where $\Omega$ is the 2-form \eqref{omf}  restricted to embeddings that enclose a fixed area $a$,
and $\om^\Ga$ is the symplectic form \eqref{omsum} on the configuration space $\Conf_k$.
More precisely,
\begin{align}\label{omag}
(\Om^{\Ga\bar\w})_f(u_f, v_f) 
%&= \Om_f(u_f,v_f) + (\om^\Ga)_{(f(t_i))}(u_f(t_i),v_f(t_i))\nonumber \\
&=\frac{\w}{2\pi}\int_{S^1}\omega(u_f(t), v_f(t)) dt + \sum_{i=1}^k\Gamma_i\omega(u_f(t_i), v_f(t_i)).
\end{align}
Given the fact that $\omega=d\nu$ is an exact form, $\Om^{\Ga\bar\w}$ is also exact, hence closed
(the hat calculus from \cite{V} can be useful here). 
%More precisely, $\Om^{\Ga\bar\w}=d(\Th+\sum_{i=1}^k\Ga_i(\ev_{t_i})^*\nu)$ for the 1-form
%\[\Th_f(u_f)=\frac{\w}{2\pi}\int_{S^1}\nu(u_f(t)) dt .\]

To prove the non-degeneracy of $\Om^{\Ga\bar\w}$, we will use the Euclidean orthogonal decomposition of vector fields along the curve $C=f(S^1)$ into their  normal and tangent parts, together with the non-degenerate pairing from the Appendix.
Let $\{{\bf t},{\bf n}\}$ be the positively oriented orthonormal frame along the curve, where the tangent vector ${\bf t}$ respects the orientation induced by 
the volume form $\be$ on the curve. In particular $\om({\bf t},{\bf n})=1$.
We decompose $u_f\in T_f\Emb_a(S^1,\RR^2)$ into its normal and tangential components
\begin{equation}\label{uft}
u_f=(\rh {\bf n}+\la {\bf t})\o f.
\end{equation}
Because of the fixed enclosed area,  the normal variation $\rh$ is performed with  zero integral functions in $C_0^\oo(C):=\{\rh\in C^\oo(C):\int_C\rh\mu_C=0\}$,
where $\mu_C=i_{\bf n}\om$ denotes the volume form on $C$ induced by the Euclidean metric,
%With the decomposition \eqref{uft}, the tangent space to the manifold of embeddings that enclose a fixed area is
so the tangent space decomposes as
\begin{equation}\label{ctc}
T_f\Emb_a(S^1,\RR^2) \cong C_0^\infty(C) \times C^\infty(C).
%\quad u_f\leftrightarrow (\rh,\la),
\end{equation}
The symplectic form  \eqref{omag} becomes
\[
(\Om^{\Ga\bar\w})_f((\rho_1,\lambda_1),(\rho_2,\lambda_2))
=\frac{\w}{2\pi}\int_C(\rh_2\la_1-\rh_1\la_2)\be
+\sum_{i=1}^k\Ga_i (\rh_2\la_1-\rh_1\la_2)(x_i),
\]
where $x_i=f(t_i)$ are $k$ points on the curve $C=f(S^1)$.
We define the pairing 
\begin{equation}\label{par}
\langle\langle \ ,\  \rangle\rangle: C_0^\infty(C) \times C^\infty(C) \rightarrow \RR, \quad
\langle\langle \rho,\lambda \rangle\rangle := \frac{\w}{2\pi}\int_{C}\rh\la\be+
 \sum_{i=1}^k\Gamma_i(\rho\lambda)(x_i),
\end{equation}
which is non-degenerate by the Lemma \ref{app}.
It allows us to rewrite the 2-form  $\Om^{\Ga\bar\w}_a$ in \eqref{omag} at $f$ as
\begin{equation}\label{pair}
(\Om^{\Ga\bar\w})_f((\rho_1,\lambda_1),(\rho_2,\lambda_2)) = \langle\langle \rh_2,\la_1 \rangle\rangle - \langle\langle \rh_1,\la_2 \rangle\rangle
\end{equation}
and we conclude that:

\begin{proposition}
The  2-form  $\Om^{\Ga\bar\w}$ on $\Emb_a(S^1,\RR^2)$ given in \eqref{omag} is symplectic.
\end{proposition}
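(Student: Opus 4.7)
The plan is to verify separately the two conditions required for a 2-form to be symplectic: closedness and pointwise non-degeneracy. Closedness is essentially already noted in the text: since $\om=d\nu$ is exact on $\RR^2$ and $\ev_{t_i}^*\om=d(\ev_{t_i}^*\nu)$, each summand in \eqref{short} is exact, and a similar exactness statement for $\Om$ follows from the hat calculus of \cite{V}. Hence $\Om^{\Ga\bar\w}=d\Th$ for a suitable primitive $\Th$, so in particular it is closed.

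For non-degeneracy I would exploit the decomposition \eqref{ctc} and the explicit bilinear expression \eqref{pair}. Fix an embedding $f$ and a tangent vector $(\rh_1,\la_1)\in C_0^\oo(C)\x C^\oo(C)$, and assume
\[
\langle\langle \rh_2,\la_1\rangle\rangle-\langle\langle\rh_1,\la_2\rangle\rangle=0
\qquad\text{for every }(\rh_2,\la_2)\in C_0^\oo(C)\x C^\oo(C).
\]
Setting $\rh_2=0$ gives $\langle\langle\rh_1,\la_2\rangle\rangle=0$ for all $\la_2\in C^\oo(C)$, and setting $\la_2=0$ gives $\langle\langle\rh_2,\la_1\rangle\rangle=0$ for all $\rh_2\in C_0^\oo(C)$. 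Non-degeneracy of the pairing \eqref{par} asserted in Lemma \ref{app} forces $\rh_1=0$ and $\la_1=0$, hence the radical of $(\Om^{\Ga\bar\w})_f$ is trivial.

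The main point of the argument is thus the non-degeneracy of the pairing $\langle\langle\cdot,\cdot\rangle\rangle$, which is the content of the appendix lemma; the symplectic property of $\Om^{\Ga\bar\w}$ reduces cleanly to it once one has identified the tangent space via the normal/tangential decomposition \eqref{uft} and absorbed the zero-area condition into the constraint $\int_C\rh\,\mu_C=0$. The only subtlety worth checking is that the pairing is really non-degenerate in both arguments separately: in the second argument one must evaluate against test functions $\rh_2$ with vanishing mean, so one has to verify that these are enough to detect an arbitrary $\la_1\in C^\oo(C)$, which is where the combination of the integral term weighted by $\be$ and the point evaluations at the $x_i$'s enters crucially. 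Since this is precisely the statement of Lemma \ref{app}, the proposition follows.
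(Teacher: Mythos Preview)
Your argument is correct and follows the paper's own route essentially verbatim: closedness via exactness of $\om=d\nu$ and the hat calculus, then non-degeneracy by passing to the normal/tangential decomposition \eqref{ctc}, rewriting $(\Om^{\Ga\bar\w})_f$ in the canonical form \eqref{pair}, and invoking the non-degeneracy of the pairing from Lemma~\ref{app}. The only difference is cosmetic---you spell out the ``set $\rh_2=0$, then set $\la_2=0$'' step that the paper leaves implicit.
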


\begin{remark}\label{yael}\rm
Let us denote by the same symbol $\Om^{\Ga\bar\w}$ the symplectic form on $\O_a^{\bar\w}$ transported 
%from $\Emb_a(S^1,\RR^2)$ 
with the bijection $\Ph$, and let us use the decomposition \eqref{ctc} also for the tangent space $T_{(C,\be,(x_i))}\O_a^{\Ga\bar\w}$. The infinitesimal generator $f\mapsto f'$ for the $\Rot(S^1)$ action on $\Emb_a(S^1,\RR^2)$
corresponds to $(0,|f'|)$.
% in the decomposition \eqref{ctc}. 
It moves all the points $x_1,\dots,x_k$ on the curve without changing $(C,\be)$.
If we want to move a single point $x_1$ on the curve, without changing the curve and the other $k-1$ points,
then we have to change $\be$ because $\int_{x_i}^{x_{i+1}}\be$ is constant.
A tangent vector at $(C,\be,(x_i))$ that does this is of the form $(0,\la)$ in the decomposition \eqref{ctc},
 with $\la\in C^\oo(C)$ supported in a neighborhood of $x_1$ that doesn't contain any of the points $x_2,\dots,x_k$.
A symplectically conjugate direction 
%to moving the point $x_1\in C$ on the curve, by letting both the curve and all the remaining points $x_i$ unchanged,
% (this forces all the other points $x_i$ to move accordingly, since the local vorticities are fixed) 
requires to deform the curve around $x_1$.
Indeed, in this case the decomposed vector $(\bar\rh,\bar\la)$ is chosen such that the support of $\bar\rh$ contains a neighborhood of $x_1$, 
and in this way we can achieve that
\[
(\Om^{\Ga\bar\w})_{(C,\be,(x_i))}((0,\la),(\bar\rho,\bar\la))=\langle\langle \bar\rho,\lambda \rangle\rangle 
= \frac{\w}{2\pi}\int_{C}\bar\rho\lambda\be+ \Gamma_1(\bar\rho\lambda)(x_1)\ne 0.
\]
\end{remark}

%\todo{2}
\begin{remark}\rm
Another formulation for the symplectic form associated to pointed vortex loops uses
the decomposition of the tangent space in terms of triples \eqref{trip}.
The symplectic form \eqref{omag} becomes
\begin{align*}
(\Om^{\Ga\bar\w})_{(C,\be,x)}((u_C,d\la,(v_i)),(\bar u_C,d\bar\la,(\bar v_i)))&=\int_C(\bar\la i_{u_C}\om-\la i_{\bar u_C}\om)\\
&+\sum_{i=1}^k\Ga_i(\om(u_C(x_i),\bar v_i)-\om(\bar u_C(x_i),v_i)).
\end{align*}
\end{remark}

%%%%%%

\paragraph{Coadjoint orbit.}
It is easy to see that the natural $G$ action 
$\ph\cdot f=\ph\o f$ 
on the symplectic manifold $(\Emb_a(S^1,\RR^2),\Om^{\Ga\bar\w})$
is Hamiltonian
%\begin{align*}(\Omega^{\Ga\bar\w})_f(X_h\circ f, v_f)&=\frac{\w}{2\pi}\int_{S^1}(d_{f(t)}h)(v_f(t))dt + \sum_{i=1}^k \Ga_i(d_{f(t_i)}h)(v_f(t_i))\end{align*}
with equivariant momentum map 
\begin{equation}\label{mmappvl}
J: \Emb_a(S^1, \RR^2) \rightarrow \g^*, \quad\langle J(f), h \rangle = \frac{\w}{2\pi}\int_{S^1}h(f(t))dt + \sum_{i=1}^k\Gamma_ih(f(t_i)).
\end{equation}
%One can easily check that $J$ is equivariant.
Under the identification $\Ph$ in \eqref{fie} of $\Emb_a(S^1,\RR^2)$ with $\O_a^{\bar\w}$, the momentum map \eqref{mmappvl} is exactly the map \eqref{cbx}.

The injectivity of $J$ depends on the rotational symmetry of the data configurations $(\Ga_i)$ and $(\w_i)$.
Let $\ell\in\{1,\dots,k\}$ be the smallest natural number that satisfies
\begin{equation}\label{sym}
\Ga_{i+\ell}=\Ga_i,\quad \w_{i+\ell}=\w_i, \text{ for all }i\in\{1,\dots,k\},
\end{equation}
identities fulfilled for $\ell=k$ by convention.
%with the convention that $\Ga_{i+k}=\Ga_i$ and $\w_{i+k}=\w_i$ for all $i$.
Then $\ell$ must be a divisor of $k$,
%Since $k$ automatically satisfies \eqref{sym}, 
because the biggest common divisor of $\ell$ and $k$ also satisfies \eqref{sym},
so it must be equal to $\ell$. 

Let $m=k/\ell$. Then $\w_1+\dots+\w_\ell={\w}/{m}$, 
and the same holds for the sum of any $\ell$ consecutive $\w_i$'s, so 
$$t_{i+\ell}=t_i+\frac{2\pi}{\w}(\w_i+\dots+\w_{i+\ell-1})
%=t_i+\frac{2\pi}{\w}(\w_1+\dots+\w_{\ell})
=t_i+\frac{2\pi}{m}.$$
It follows that the subgroup 
of $\Rot(S^1)$ generated by the rotation  $R_{2\pi/m}$, which is isomorphic to the cyclic group $\ZZ_m$,
preserves the symplectic form $\Om^{\Ga\bar\w}$ in \eqref{omag}. 
Thus the $G$ action on  the quotient manifold $\Emb_a(S^1,\RR^2)/\ZZ_m$ is Hamiltonian with momentum map descending from \eqref{mmappvl}:
\begin{equation}\label{zem}
\bar J: \Emb_a(S^1, \RR^2)/\ZZ_m \rightarrow \g^*, \quad\langle\bar J([f]), h \rangle = \frac{\w}{2\pi}\int_{S^1}h(f(t))dt + \sum_{i=1}^k\Gamma_ih(f(t_i)).
\end{equation}
The two extreme cases are:
(i)  the generic case, for data configurations $(\Ga_i)$ and $(\w_i)$ without rotational symmetry, with $\ell=k$ and  $m=1$,
(ii) the exceptional case, for equal point vorticities $\Ga_i$ and equal partial vorticities $\w_i$, with $\ell=1$ and $m=k$. 

\begin{lemma}\label{lemi}
The equivariant momentum map $\bar J$ is injective. 
\end{lemma}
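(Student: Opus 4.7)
The plan is to work through the identification $\Ph$ of \eqref{fie} and translate the statement into one about pointed vortex loops. Suppose $[f_1],[f_2]\in\Emb_a(S^1,\RR^2)/\ZZ_m$ satisfy $\bar J([f_1])=\bar J([f_2])$, and let $(C_j,\be_j,(x_i^j))=\Ph(f_j)$ for $j=1,2$. Under $\Ph$ the momentum map \eqref{zem} becomes the pairing $X_h\mapsto \int_{C_j}h\be_j+\sum_i\Ga_ih(x_i^j)$, so we must show that the two pointed vortex loops differ by a cyclic shift of the marked points by a multiple of $\ell$.

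The first step is to recover the geometric data from the distribution. The support of $J([f_j])$ in $\RR^2$ is exactly the image curve $C_j$, hence $C_1=C_2=:C$. Viewing both sides as distributions on $C$ gives the identity
\[
\be_1+\sum_{i=1}^k\Ga_i\de_{x_i^1}=\be_2+\sum_{i=1}^k\Ga_i\de_{x_i^2}
\]
on $C$. Separating the smooth (absolutely continuous) part from the atomic part yields $\be_1=\be_2=:\be$, and equality of the atomic measures. Since the marked points within each tuple are distinct, matching atoms gives a permutation $\si$ of $\{1,\dots,k\}$ with $x_i^2=x_{\si(i)}^1$ and $\Ga_i=\Ga_{\si(i)}$ for every $i$.

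The second step is to show that $\si$ is a cyclic shift by a multiple of $\ell$. Both tuples $(x_i^1)$ and $(x_i^2)$ are listed consecutively along $C$ in the orientation fixed by $\be$, so $\si$ must be a cyclic shift: $\si(i)=i+s\pmod k$ for some $s$. From $x_i^2=x_{i+s}^1$ and the definition of the partial vorticities,
\[
\w_i=\int_{x_i^2}^{x_{i+1}^2}\be=\int_{x_{i+s}^1}^{x_{i+s+1}^1}\be=\w_{i+s},
\]
while the matching of atoms already forced $\Ga_{i+s}=\Ga_i$. Thus $s$ satisfies the symmetry conditions \eqref{sym}, and by the minimality of $\ell$ we conclude $\ell\mid s$, say $s=j\ell$ with $0\le j<m$.

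Finally, on the embedding side, $f_2(t_i)=x_i^2=x_{i+j\ell}^1=f_1(t_{i+j\ell})=f_1(t_i+2\pi j/m)$, so the two parametrizations $f_2$ and $f_1\circ R_{2\pi j/m}$ agree at the prescribed points and encode the same $(C,\be)$; by the uniqueness part in the proof of Lemma \ref{pfi}, they coincide. Hence $[f_1]=[f_2]$ in $\Emb_a(S^1,\RR^2)/\ZZ_m$. The main obstacle is the careful bookkeeping in the second step, where the combinatorial shift $s$ extracted from the singular support has to be shown to be compatible with both the $\Ga$-labels and the $\w$-labels simultaneously, so that the minimality of $\ell$ can be invoked; everything else is distribution-theoretic unpacking.
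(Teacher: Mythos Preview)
Your argument is correct and reaches the same conclusion as the paper, but the route differs in how you extract $\be_1=\be_2$ and the rotation. The paper stays on the parametrization side throughout: after showing $C_1=C_2$ with a bump-function argument (essentially your support observation), it writes $f_2=f_1\circ\gamma$ for some $\gamma\in\Diff(S^1)$ and uses another bump-function argument to force $\gamma'=1$, hence $\gamma=R_\tau$; only then does it analyze the Dirac part to pin down $\tau$ as a multiple of $2\pi/m$. You instead pass through $\Ph$ to the pointed-loop side, split the distribution on $C$ into its smooth and atomic parts to obtain $\be_1=\be_2$ directly, match the atoms to get the cyclic shift $s$, and then invoke the injectivity of $\Ph$ (Lemma~\ref{pfi}) to conclude $f_2=f_1\circ R_{2\pi j/m}$ without ever redoing the ``$\gamma'=1$'' computation. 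Your approach is a bit cleaner because it reuses Lemma~\ref{pfi} rather than reproving part of it, while the paper's version is more self-contained and makes every step explicit via test functions. One small point worth stating for completeness: the minimality of $\ell$ gives $\ell\mid s$ because any $s$ satisfying \eqref{sym} together with $\ell$ forces $\gcd(\ell,s)$ to satisfy \eqref{sym} as well, hence $\gcd(\ell,s)=\ell$; the paper relies on this implicitly too.
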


\begin{proof}
Let $f_1,f_2\in\Emb_a(S^1,\RR^2)$ with $\bar J([f_1])=\bar J([f_2])$, which means  that $J(f_1)=J(f_2)$. 
Thus for all $h\in C^\oo_c(\RR^2)$,
\begin{equation}\label{jef}
\frac{\w}{2\pi}\int_0^{2\pi}h(f_1(t)) dt + \sum_{i=1}^k \Gamma_ih(f_1(t_i)) = \frac{\w}{2\pi}\int_0^{2\pi}h(f_2(t)) dt + \sum_{i=1}^k\Gamma_ih(f_2(t_i)),
\end{equation}
with $t_i=\tfrac{2\pi}{\w}(\w_1+\dots+\w_{i-1})$.
Assume by contradiction that the curves $C_1=f_1(S^1)$ and $C_2=f_2(S^1)$ do not coincide.
%$\text{Im}(f_1) \neq \text{Im}(f_2)$ and denote the curves these two images describe by $C_1$, respectively by $C_2$. Since they don't coincide, 
There exists a point $x$ contained in the second, but not in the first, which means that exists a whole neighborhood of $x$ contained in $C_2$ but not in $C_1$.
Shrinking this neighborhood, we get a subset $U\subset C_2 \setminus C_1$ such that $f_2(t_i)\notin U$ for all $i$.
%In particular also $f_1(t_i)\notin  U$ for all $i$. 
Now let $h \in C_c^\infty(\RR^2)$ be a non-negative function supported in $U$. In particular, $h$ vanishes on $C_1$.
Under these circumstances 
\begin{align*}
J(f_1) = J(f_2) &\Rightarrow 
\frac{\w}{2\pi}\int_0^{2\pi}h(f_2(t)) dt = 0,
\end{align*}
which contradicts the choice of $h$. The assumption has been false, so the embeddings $f_1,f_2$ have the same image.

The existence of a diffeomorphism $\gamma\in \Diff(S^1)$ such that $f_2 = f_1 \circ \gamma$ follows. We show that $\ga'=1$, \ie $\ga$ is a rotation.
We assume by contradiction that there exists $t_0\in S^1$ such that $\gamma'(t_0)\neq 1$. Then there is a whole neighborhood of $t_0$ in $S^1$ on which 
$\gamma^\prime-1$ doesn't change sign.
Shrinking it, we get a subset $V$ of $S^1$ such that  $t_i\notin V$ and $\gamma(t_i) \notin V$ for all $i$.
Any non-negative function $h \in C_c^\infty(\RR^2)$ such that $h\o f_2$ is supported in $V$ satisfies
\begin{equation*}
\int_0^{2\pi}(h\o f_1)(t)) dt  =\int_0^{2\pi}(h\o f_2)(\gamma^{-1}(t)) dt = \int_0^{2\pi}(h\o f_1)(t){\gamma}^\prime(t) dt \ne \int_0^{2\pi}(h\o f_2)(t)dt.
\end{equation*}
This contradicts the identity $\int_0^{2\pi}(h\o f_1)(t)) dt  = \int_0^{2\pi}(h\o f_2)(t)dt$ that follows from \eqref{jef}.
Thus $$\gamma^\prime = 1 \Rightarrow \gamma\in \text{Rot}(S^1) \Rightarrow f_2 = f_1 \circ R_\tau.$$

It remains to prove that $\ta$ is a multiple of $2\pi/m$.
With $f_2 = f_1 \circ R_\tau$, the identity \eqref{jef} becomes
\[
\sum_{i=1}^k \Gamma_i\de_{f_1(t_i) }= \sum_{i=1}^k\Gamma_i\de_{f_1(t_i+\ta)}. 
\]
Since $f_1$ is an embedding, the rotation $R_\ta$ has to make a  rotational permutation of the points $t_i$ on the circle,
while preserving the corresponding vorticities. More precisely, there exists a natural number $\ell(\ta)$ such that for all $i$:
\[
t_{i+\ell(\ta)}=t_i+\ta,\quad \Ga_{i+\ell(\ta)}=\Ga_i.
\]
It follows that  $t_{i+\ell(\ta)+1}-t_{i+\ell(\ta)}=t_{i+1}-t_i$, so $\w_{i+\ell(\ta)}=\w_i$ for all $i$.
Now the symmetry conditions \eqref{sym} are fulfilled for $\ell(\ta)$,
which implies that $\ell(\ta)$ is a multiple of $\ell$.
Writing $\ell(\ta)=j\ell$ for $j\in\NN$, we get
\[
\ta=t_{\ell(\ta)+1}-t_1=\frac{2\pi}{\w}(\w_1+\dots+\w_{\ell(\ta)})\stackrel{\eqref{sym}}=\frac{2\pi}{\w}j(\w_1+\dots+\w_{\ell})
=\frac{2\pi}{m}j,
\]
as desired. Hence $R_{\ta}=(R_{{2\pi}/{m}})^j$ and $[f_1]=[f_2]$.
This ensures the injectivity of $\bar J$.
\end{proof}

\begin{theorem}\label{main}
The space $\O_a^{\bar\w}$ of pointed vortex loops with enclosed area $a$ and partial weights $\w_i$, $i=1,\dots,k$,
identified with $\Emb_a(S^1,\RR^2)$ 
and endowed with the symplectic form $\Om^{\Ga\bar\w}$ in \eqref{omag},
has a natural Hamiltonian action of $\Ham_c(\RR^2)$. We say that the data $(\Ga_i),(\w_i)$ have rotational symmetry if there exists a natural number $0<\ell<k$ such that $\Ga_{i+\ell}=\Ga_i$ and $\w_{i+\ell}=\w_i$ for all $i$, and we choose the smallest $\ell$ with these properties.
\begin{enumerate}
\item
If the data $(\Ga_i),(\w_i)$ have no rotational symmetry, then $\Emb_a(S^1,\RR^2)$ can be realized as a coadjoint orbit.
More precisely, the momentum map $J$ in \eqref{mmappvl}
is one-to-one onto a coadjoint orbit and the KKS symplectic form satisfies 
$J^*\om_{KKS} = \Om^{\Ga\bar\w}$. 
\item
If the data $(\Ga_i),(\w_i)$ have rotational symmetry, then $k=m\ell$ for a natural number $m>1$,
and the quotient space $\Emb_a(S^1,\RR^2)/\ZZ_m$, with $\ZZ_m$ the rotation subgroup  generated by the rotation $R_{2\pi/m}$, 
%(isomorphic to $\ZZ_m$) 
can be realized as a coadjoint orbit. More precisely,
the momentum map $\bar J$ in \eqref{zem}
is one-to-one onto a coadjoint orbit and the KKS symplectic form satisfies 
$\bar J^*\om_{KKS} = \Om^{\Ga\bar\w}$. 
\end{enumerate}
\end{theorem}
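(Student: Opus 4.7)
The plan is to apply the standard principle that a transitive Hamiltonian action equipped with an injective equivariant momentum map realizes the source symplectic manifold as a single coadjoint orbit, with the ambient symplectic form equal to the pull-back of the KKS form. Three ingredients need to be assembled: the momentum-map property of $J$, transitivity of the $G$-action, and injectivity after the appropriate quotient.

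First I would verify that $J$ in \eqref{mmappvl} is an equivariant momentum map for the $G$-action $\ph\cdot f=\ph\o f$ on $(\Emb_a(S^1,\RR^2),\Om^{\Ga\bar\w})$. Equivariance is immediate from \eqref{mmappvl} via the change of variable $h\mapsto h\o\ph^{-1}$. The defining identity $dJ_h=i_{\zeta_h}\Om^{\Ga\bar\w}$, with infinitesimal generator $\zeta_h(f)=X_h\o f$, reduces via $dh=i_{X_h}\om$ to a term-by-term matching between \eqref{mmappvl} and \eqref{omag}. Transitivity of the $\Ham_c(\RR^2)$-action on $\Emb_a(S^1,\RR^2)$ is granted by Proposition \ref{transitivity} of the Appendix.

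For injectivity, the two cases split. In case (1) the data carry no rotational symmetry, so $\ell=k$ and $m=1$; then $\bar J=J$ and Lemma \ref{lemi} directly provides injectivity. In case (2), the discussion preceding the theorem exhibits the cyclic subgroup $\ZZ_m\subset\Rot(S^1)$ generated by $R_{2\pi/m}$ as a symmetry of $\Om^{\Ga\bar\w}$ commuting with the $G$-action. This $\ZZ_m$-action is free, since $f\o\ga=f$ for an embedding $f$ forces $\ga=\id$, so the finite quotient $\Emb_a(S^1,\RR^2)/\ZZ_m$ is a smooth symplectic manifold on which the $G$-action is Hamiltonian and transitive, and Lemma \ref{lemi} yields injectivity of the descended $\bar J$.

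From these three ingredients, equivariance plus transitivity place the image of $J$ (respectively $\bar J$) inside a single coadjoint orbit $\O$ and make it surjective onto $\O$, while injectivity upgrades this to a bijection. The identification $J^*\om_{KKS}=\Om^{\Ga\bar\w}$ then follows because both 2-forms are $G$-invariant and, by transitivity, it suffices to match them on pairs of infinitesimal generators $\zeta_h,\zeta_k$ at one point; both sides evaluate to $-\langle J(f),\{h,k\}\rangle$, once by the momentum-map identity applied to $J$ and once by the definition of $\om_{KKS}$. The most delicate point is the quotient construction in case (2), but because $\ZZ_m$ is finite and acts freely it reduces to a standard verification that the reduced symplectic form and descended momentum map are well defined.
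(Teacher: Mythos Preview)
Your proposal is correct and follows essentially the same route as the paper: injectivity via Lemma \ref{lemi}, transitivity via Proposition \ref{transitivity}, and then the standard principle that a transitive Hamiltonian action with injective equivariant momentum map realizes the coadjoint orbit (stated in the paper as Proposition \ref{stefan}). You have simply made explicit a few steps---the momentum-map verification, freeness of the $\ZZ_m$-action, and the argument behind Proposition \ref{stefan}---that the paper either leaves to the reader or cites.
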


This follows from Lemma \ref{lemi} together with  the following mathematical folklore result (see for instance the Appendix in \cite{HV}):
\begin{proposition}\label{stefan}
Suppose the action of $G$ on $(\mathcal M,\Om)$ is transitive 
%and infinitesimally transitive, 
with injective equivariant moment map $J:\mathcal M\to\g^*$.
Then $J$ is one-to-one onto a coadjoint orbit of $G$.
Moreover, it pulls back the Kostant--Kirillov--Souriau symplectic form $\omega_{\KKS}$ on the coadjoint orbit to the symplectic form $\Om$.
\end{proposition}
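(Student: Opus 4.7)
The plan is to prove the proposition in two pieces: first that $J$ identifies $\mathcal M$ set-theoretically with a coadjoint orbit, then that it pulls back the KKS form to $\Om$.

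For the first piece, I would invoke equivariance $J(g\cdot x)=\Ad^*_gJ(x)$ together with transitivity of the $G$-action. Fixing a base point $x_0\in\mathcal M$ and setting $\mathcal O:=\Ad^*_G J(x_0)$, transitivity of $G$ on $\mathcal M$ forces $J(\mathcal M)=\mathcal O$, so $J(\mathcal M)$ is exactly a coadjoint orbit. Injectivity of $J$ then promotes this surjection to a $G$-equivariant bijection $J\colon\mathcal M\to\mathcal O$.

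For the symplectic identification, both $\Om$ and $\omega_{\KKS}$ are $G$-invariant: $\Om$ because the existence of an equivariant momentum map implies that $G$ acts by symplectomorphisms, and $\omega_{\KKS}$ by its construction from the coadjoint action. Since $G$ acts transitively on $\mathcal M$, it suffices to verify $J^*\omega_{\KKS}=\Om$ at the single point $x_0$, where every tangent vector is of the form $\xi_{\mathcal M}(x_0)$ for some $\xi\in\g$. Equivariance differentiated gives $T_{x_0}J\cdot\xi_{\mathcal M}(x_0)=-\ad^*_\xi J(x_0)$, i.e.\ $T_{x_0}J$ sends fundamental vector fields on $\mathcal M$ to fundamental vector fields for the coadjoint action on $\mathcal O$. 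Plugging this into the defining formula for $\omega_{\KKS}$ produces $\langle J(x_0),[\xi,\eta]\rangle$ (up to the chosen sign). On the other hand, the momentum map relation $i_{\xi_{\mathcal M}}\Om=d\langle J,\xi\rangle$ evaluated on $\eta_{\mathcal M}(x_0)$ gives
\[
\Om_{x_0}\bigl(\xi_{\mathcal M}(x_0),\eta_{\mathcal M}(x_0)\bigr)=\langle T_{x_0}J\cdot\eta_{\mathcal M}(x_0),\xi\rangle=-\langle \ad^*_\eta J(x_0),\xi\rangle=\langle J(x_0),[\xi,\eta]\rangle,
\]
matching the previous expression.

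The main delicate point is not the algebra but the infinite-dimensional setting: one needs that the tangent space at $x_0$ is spanned by the infinitesimal generators $\{\xi_{\mathcal M}(x_0):\xi\in\g\}$, which in the Fr\'echet context is precisely the smooth transitivity statement already used in Proposition \ref{permit}, Theorem \ref{below}, and Theorem \ref{main} (cf.\ Proposition \ref{transitivity} from the Appendix). A secondary care is to fix consistent sign conventions between the momentum map, the coadjoint action, and the KKS form; once these are matched, the folklore argument sketched above goes through without further obstacle.
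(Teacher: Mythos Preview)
The paper does not actually prove Proposition~\ref{stefan}: it quotes the result as ``mathematical folklore'' and refers to the Appendix in \cite{HV} for a proof. So there is no paper proof to compare against; your task is only to supply a correct argument.

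Your argument is the standard one and is correct. Equivariance plus transitivity immediately give $J(\mathcal M)=\Ad^*_G J(x_0)$, and injectivity upgrades this to a $G$-equivariant bijection onto the coadjoint orbit. For the symplectic statement, your reduction to a single point via $G$-invariance of both forms is valid (note that $J^*\omega_{\KKS}$ is $G$-invariant because $J$ is equivariant and $\omega_{\KKS}$ is $G$-invariant), and the computation
\[
\Om_{x_0}\bigl(\xi_{\mathcal M}(x_0),\eta_{\mathcal M}(x_0)\bigr)
=\langle T_{x_0}J\cdot\eta_{\mathcal M}(x_0),\xi\rangle
=\langle J(x_0),[\xi,\eta]\rangle
=(\omega_{\KKS})_{J(x_0)}\bigl(T_{x_0}J\cdot\xi_{\mathcal M}(x_0),T_{x_0}J\cdot\eta_{\mathcal M}(x_0)\bigr)
\]
is exactly what is needed. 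You are also right to flag the one genuinely nontrivial point in the Fr\'echet setting: that every tangent vector at $x_0$ is an infinitesimal generator. In the applications of this paper that is guaranteed by the explicit infinitesimal transitivity established in Proposition~\ref{transitivity}, so the folklore argument applies as stated.
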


\begin{remark}\rm
In the first case of the Theorem \ref{main}, the same differential manifold $\Emb_a(S^1,\RR^2)$ serves as a model for several coadjoint orbits.
They have non-equivalent KKS symplectic forms $\Om^{\Ga\bar\w}=\Om+\sum_{i=1}^k\Ga_i\ev_{t_i}^*\om$, 
unless all the $\Ga_i$'s and $\w_i$'s coincide.
This is clear for the point vorticities $\Ga_i$. An argument for the partial vorticities $\w_i$ goes as follows: 
a different set $w'_i$ defines a different set of coordinates $t'_i$ on the circle by \eqref{ti}, starting with $t'_1=t_1=0$.
Let $\ga\in\Diff(S^1)$ be a diffeomorphism that maps each $t_i$ to its corresponding $t'_i$.
The induced diffeomorphism of $\Emb_a(S^1,\RR^2)$ given by reparametrization, $f\mapsto f\o\ga$,  pulls back 
$\sum_{i=1}^k\Ga_i\ev_{t'_i}^*\om$ to $\sum_{i=1}^k\Ga_i\ev_{t_i}^*\om$.
But it doesn't preserve $\Om$, unless $\ga$ is a rigid rotation (see \cite{V}). This is impossible because $\ga(0)=0$. 
As a consequence it doesn't preserve $\Om^{\Ga\bar\w}$ either.
\end{remark}

\begin{remark}\rm
The circle bundle $\pi_{\can}:\Emb_a(S^1,\RR^2)\to\Emb_a(S^1,\RR^2)/\Rot(S^1)$
has an unexpected feature: both the total space and the base manifold can be identified with coadjoint orbits, hence both are symplectic manifolds,
even though the fibre is 1-dimensional.
% $\O_a^{\w}$, but also the total space is a coadjoint orbit $\O_a^{\Ga\bar\w}$
\end{remark}

\begin{remark}\rm
There is no direct link to the quotient space $\Conf_k/\S_K$ from Proposition \ref{permit},
since the confinement of the $k$ points $x_i$ to the curve  $C$ forces the permutation subgroup to respect their ordering.
But of course $\ZZ_m$ can be seen as a subgroup of the permutation group $\S_k$ acting on the configuration space $\Conf_k$.
It is generated by the following product of $\ell$ cycles of length $m$:
\[
(1,\ell+1,\dots,(m-1)\ell+1)(2,\ell+2,\dots,(m-1)\ell+2),\dots (\ell,2\ell,\dots,k).
\]
\end{remark}
%%%

\paragraph{Symplectic embedding.}
In the previous section we have described the symplectic manifolds $(\Conf_k, \om^{\Gamma})$ and $(\O_a^{\w}, \Omega_{a}^{\w})$ 
%are symplectic manifolds, describing the coadjoint orbits of
that consist of point vortices and vortex loops. Their Cartesian product is a symplectic manifold in a canonical way:
\begin{equation}\label{product}
(\O_a^{\w}\x\Conf_k, \ \Om_{\prod}:=\pr_1^*\Omega_a^{\w} + \pr_2^*\om^\Ga),
\end{equation}
where $\text{pr}_1$ and $\text{pr}_2$ are the projections on the first, respectively on the second factor. 

%Let us take vorticity data $(\Ga_i),(\w_i)$ without rotational symmetries.
We consider the injective map that splits a pointed vortex loop $(C,\be,(x_i))$ into its loop part $(C,\be)$ and its additional vortex points $(x_1,\dots,x_k)$:
\[
j:\O_a^{\Ga\bar\w}\to\O_a^{\w_1+\dots+\w_k}\x\Conf_k,\quad j(C,\be,(x_i))=((C,\be),(x_i)).
\]
With the bijections $\Ph$ and $\Ps$ from  \eqref{fie} and \eqref{psi}, we write it as:
%the following map, denoted by the same letter:
\[
j:\Emb_a(S^1,\RR^2)\to(\Emb_a(S^1,\RR^2)/\Rot(S^1))\x\Conf_k,\quad j(f)=([f],(f(t_i))).
\]

\begin{proposition}\label{sub}
The symplectic manifold $(\O_a^{\bar\w}, \Om^{\Ga\bar\w})$ of pointed vortex loops is a symplectic submanifold of 
the product of the symplectic manifolds $(\O_a^{\w_1+\dots+\w_k},\Om_a^{\w_1+\dots+\w_k})$ of vortex loops 
and $(\Conf_k,\om^\Ga)$ of vortex points.
\end{proposition}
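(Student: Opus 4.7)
The plan is to work with the reparametrization of $j$ on $\Emb_a(S^1,\RR^2)$ and reduce everything to a pullback computation, since the two factors of the product already have their symplectic forms presented in a convenient way.

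First I would observe that the two components of $j$ are familiar maps:
\begin{equation*}
\pr_1\circ j:\Emb_a(S^1,\RR^2)\to\Emb_a(S^1,\RR^2)/\Rot(S^1),\quad f\mapsto[f]
\end{equation*}
is the canonical projection $\pi_{\can}$, while $\pr_2\circ j:f\mapsto(f(t_i))$ is the tuple of evaluation maps $(\ev_{t_i})$ appearing in the definition \eqref{short} of $\Om^{\Ga\bar\w}$. By the construction recalled in Theorem \ref{below}, $\Om_a^{\w}$ is the symplectic form obtained by reduction of $\Om$ by the $\Rot(S^1)$ action, so $\pi_{\can}^*\Om_a^{\w}=\Om$. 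Consequently
\begin{equation*}
j^*\Om_{\prod}=\pi_{\can}^*\Om_a^{\w}+\sum_{i=1}^k\Ga_i\,\ev_{t_i}^*\om=\Om+\sum_{i=1}^k\Ga_i\,\ev_{t_i}^*\om=\Om^{\Ga\bar\w}.
\end{equation*}
This already produces the symplectic identification, since the right-hand side is the symplectic form we have established on $\Emb_a(S^1,\RR^2)$.

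Next I would check that $j$ is an injective immersion. For injectivity, if $j(f_1)=j(f_2)$ then $[f_1]=[f_2]$ gives $f_2=f_1\circ R_\tau$ for some $\tau$, while the equality of the evaluation components at $t_1=0$ forces $f_1(0)=f_1(\tau)$; since $f_1$ is an embedding this gives $\tau=0$ and hence $f_1=f_2$. For the immersion property it is enough to argue that a tangent vector $u_f\in\ker T_fj$ projects to zero in both factors, so $u_f$ is vertical for $\pi_{\can}$, i.e.\ proportional to $f'$, and also vanishes at each $t_i$; as $f'(t_i)\neq 0$, we obtain $u_f=0$. Alternatively, the injectivity of $T_fj$ follows automatically from the fact that $j^*\Om_{\prod}=\Om^{\Ga\bar\w}$ is non-degenerate.

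Combining these, $j$ is an injective symplectic immersion whose pullback of $\Om_{\prod}$ is precisely $\Om^{\Ga\bar\w}$, which is what it means for $(\O_a^{\bar\w},\Om^{\Ga\bar\w})$ to sit as a symplectic submanifold of the product. I do not anticipate a genuine obstacle: the only subtle point is the identity $\pi_{\can}^*\Om_a^{\w}=\Om$, but this is by definition of the reduced symplectic form in Theorem \ref{below}, so the argument is essentially a rewriting of formula \eqref{short}.
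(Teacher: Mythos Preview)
Your proof is correct and follows exactly the paper's approach: identify $\pr_1\circ j=\pi_{\can}$ and $\pr_2\circ j=(\ev_{t_i})$, then compute $j^*\Om_{\prod}=\pi_{\can}^*\Om_a^{\w}+(\ev_{t_i})^*\om^\Ga=\Om+\sum_i\Ga_i\ev_{t_i}^*\om=\Om^{\Ga\bar\w}$. Your additional explicit verification of injectivity and the immersion property is more detailed than the paper, which simply asserts injectivity beforehand and does not spell out the immersion argument.
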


\begin{proof}
We verify that the injective map $j$ is  symplectic.
%We denote by $\pi_{\can}:\Emb_a(S^1,\RR^2)\to\Emb_a(S^1,\RR^2)/\Rot(S^1)$ the canonical projection $f\mapsto[f]$ and by $\ev_t:\Emb(S^1,\RR^2)\to\RR^2$ the evaluation at $t\in S^1$. Thus
We notice that $\pr_1\o j=\pi_{\can}$ the canonical projection, and $\pr_2\o j=(\ev_{t_i})$ a collection of evaluation maps.
Now the computation
\[
j^*\Om_{\prod}
%=(\pr_1\o j)^*\Om_a^{\w}+(\pr_2\o j)^*\om^\Ga
=\pi_{\can}^*\Om_a^{\w}+(\ev_{t_i})^*\om^\Ga=\Om+\sum_{i=1}^k\Ga_i\ev_{t_i}^*\om
=\Om^{\Ga\bar\w}
\]
ensures that $j$ is a symplectic map.
\end{proof}

\begin{remark}\rm
Even though the symplectic form $\Om_{\prod}$ is not exact (because $\Om_a^{\w}$ is not exact by \cite{GBVpre}),
still the pullback $j^*\Om_{\prod}=\Om^{\Ga\bar\w}$ is exact.
\end{remark}

Since $j$ is $G$ equivariant and symplectic, the fact that the momentum map \eqref{mmappvl} is built with the momentum maps \eqref{jloop} and \eqref{jpoint} is by no means a  surprising fact.

%%%%%

\paragraph{Polarization.}
In \cite{goldin} is shown that, within the framework of the 2D Euler equations, point vortices cannot be consistently quantized, 
while vortex loops admit natural polarizations. They notice that group $G_C$ of Hamiltonian diffeomorphisms that preserve the curve $C$ is a polarization subgroup of $G$ for the coadjoint orbit $\O_a^{\w}$ of the vortex loop $(C,\be)\in\g^*$, as noticed  in \cite{goldin}. 
The resulting configuration space is the space of loops enclosing a fixed area, without information about the vorticity distribution.

Here we show that pointed vortex loops also have natural polarizations.
More precisely, the same polarization subgroup $G_C$, that consists of diffeomorphisms that preserve the loop as a set,
works for the coadjoint orbit $\O_a^{\bar\w}$ of the pointed vortex loop 
$(C,\be,(x_i))$ from \eqref{cbx}.
Thus the configuration space is the space of loops that enclose a fixed area,
without information about the vorticity distribution and the attached points.

\begin{proposition}\label{max}
The group $G_C$ of Hamiltonian diffeomorphisms that preserve the curve $C$ is a polarization subgroup  for the coadjoint orbit
of the pointed vortex loop $(C,\be,(x_i))\in\g^*$.
\end{proposition}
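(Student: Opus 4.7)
The plan is to identify the tangent space to the $G_C$-orbit through $(C,\beta,(x_i))$ with the subspace $\{0\}\times C^\infty(C)$ in the decomposition \eqref{ctc} of $T_f\Emb_a(S^1,\RR^2)$, and then to check directly from the pairing formula \eqref{pair} that this subspace is Lagrangian with respect to $\Om^{\Ga\bar\w}$. That is the standard meaning of a polarization subgroup, and once it holds the quotient by the resulting foliation recovers the space of area-$a$ loops without any vorticity or marked-point data, as announced in the introduction.

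First I would describe the Lie algebra of $G_C$. A compactly supported Hamiltonian vector field $X_h$ integrates to a flow preserving $C$ precisely when $X_h$ is tangent to $C$ along $C$. Since $\iota_{X_h}\om=dh$ and $\om(\mathbf t,\mathbf n)=1$, tangency of $X_h$ to $C$ amounts to $dh(\mathbf t)|_C=0$, i.e.\ to $h|_C$ being locally constant. For such an $h$ the normal component of $X_h|_C$ vanishes and the infinitesimal generator $\zeta_{X_h}$ at $(C,\beta,(x_i))$ takes the form $(0,\lambda)$ in the decomposition \eqref{ctc}, with $\lambda=dh(\mathbf n)|_C\in C^\infty(C)$. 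A standard extension argument (prescribe value $0$ and normal derivative $\lambda$ on $C$ in a tubular neighbourhood, then cut off to obtain compact support) then shows that every $\lambda\in C^\infty(C)$ arises this way, so the $G_C$-orbit tangent space equals $\{0\}\times C^\infty(C)$.

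The Lagrangian check is now immediate from \eqref{pair}. Isotropy: for any two vectors $(0,\lambda_1),(0,\lambda_2)$, both summands $\langle\langle 0,\lambda_1\rangle\rangle$ and $\langle\langle 0,\lambda_2\rangle\rangle$ vanish by linearity of the pairing in its first argument. Maximality: if $(\rho,\lambda)$ is symplectically orthogonal to every $(0,\lambda')$, then
\[
(\Om^{\Ga\bar\w})_f((\rho,\lambda),(0,\lambda'))=-\langle\langle\rho,\lambda'\rangle\rangle=0\quad\text{for all }\lambda'\in C^\infty(C),
\]
and the non-degeneracy of the pairing \eqref{par} provided by Lemma \ref{app} forces $\rho=0$. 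Hence the symplectic orthogonal of $\{0\}\times C^\infty(C)$ coincides with the subspace itself, which is the Lagrangian condition.

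The main obstacle I expect is the tangent-space identification rather than the symplectic check: one has to confirm both that tangency of $X_h$ to $C$ characterises the Lie algebra of $G_C$, and that $h\mapsto dh(\mathbf n)|_C$ is surjective onto $C^\infty(C)$ when restricted to $h$ with $h|_C\equiv 0$. Both are standard (and match the polarization-subgroup argument made in \cite{goldin} for plain vortex loops), and once they are in place the non-degenerate pairing from Lemma \ref{app}, the same input that made $\Om^{\Ga\bar\w}$ symplectic, closes the argument.
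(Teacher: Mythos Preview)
Your proof is correct but takes a different route from the paper's. The paper works directly with the polarization condition $\langle (C,\be,(x_i)),[X_{h_1},X_{h_2}]\rangle=0$: isotropy holds because two Hamiltonian vector fields tangent to $C$ are linearly dependent along $C$, so $\om(X_{h_1},X_{h_2})|_C=0$ and both the integral term and the point terms vanish; maximality is shown by contradiction---given $X_{h_0}\in\h\setminus\g_C$ transverse to $C$ at some $x_0$, one chooses $X_h\in\g_C$ supported near $x_0$ and away from the $x_i$'s so that $\int_C\om(X_{h_0},X_h)\be\ne 0$. Your argument instead identifies the $G_C$-orbit tangent space with $\{0\}\times C^\infty(C)$ in the decomposition \eqref{ctc} and reads off the Lagrangian property from the canonical form \eqref{pair}, invoking the non-degeneracy of the pairing from Lemma~\ref{app} for maximality. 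The paper's approach is more self-contained (it needs neither the orthogonal decomposition nor Lemma~\ref{app}), while yours makes the Lagrangian-foliation picture explicit and shows directly why the pairing \eqref{par} is the relevant structure---precisely the point the paper makes informally in the paragraph following the proposition. One small thing to spell out in your write-up: the equivalence between ``maximal isotropic tangent space'' and ``maximal subalgebra satisfying the polarization condition'' relies on the fact that $\zeta_{X_h}$ has zero normal component iff $X_h\in\g_C$, which you have, but which deserves one sentence making the implication to maximality of $\g_C$ explicit.
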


\begin{proof}
The Lie algebra $\g_C$ of $G_C$ consists of compactly supported Hamiltonian vector fields that are tangent to the curve $C$.
It satisfies the polarization condition:
\[
\langle (C,\be,(x_i)),[X_{h_1},X_{h_2}]\rangle=\int_C\om(X_{h_1},X_{h_2})\be
+\sum_{i=1}^k\Ga_i\om(X_{h_1},X_{h_2})(x_i)
=0,
\]
for all $X_{h_1},X_{h_2}\in\g_C$,
because along $C$ the two Hamiltonian vector fields are linear dependent and $x_i\in C$.
We also use the fact that $\om(X_{h_1},X_{h_2})$ is a Hamiltonian function for $[X_{h_1},X_{h_2}]$.

Let us assume by contradiction that there exists a Lie subalgebra $\h\subset\g$ that satisfies the polarization condition
and is strictly bigger than $\g_C$. This means we find $X_{h_0}\in\h$ and $x_0\in C$ such that $X_{h_0}(x_0)\notin T_{x_0}C$ (so $X_{h_0}$ is not tangent to $C$ in a whole neighborhood of $x_0$, which we shrink so that it doesn't contain any of the points $x_1,\dots,x_k$).
But there exists $X_h\in\g_C$ with support in this neighborhood. Then $\langle (C,\be,(x_i)),[X_{h_0},X_{h}]\rangle=\int_C\om(X_{h_0},X_{h})\be\ne 0$,
which contradicts the polarization condition for $\h$. This shows the maximality of $\g_C$.
\end{proof}

Thus the configuration manifold for both $\O_{a}^{\w}$ and $\O_a^{\bar\w}$ is $G/G_C$, identified with the space of curves with fixed enclosed area $a$.   
These results fit well with the canonical expressions of the corresponding KKS symplectic forms:
\eqref{omaw} on the tangent space $C_0^\oo(C)\x dC^\oo(C)$ to $\O_{a}^{\w}$,
%for  $\Om_a^{\w}$ 
and \eqref{pair} on the tangent space $C_0^\oo(C)\x C^\oo(C)$ to $\O_a^{\bar\w}$,
%for $\Om^{\Ga\bar\w}$,
where $C^\oo_0(C)$ can be identified with the tangent space at the point $C$ to the configuration manifold $G/G_C$.

\appendix
\section{Non-degenerate pairings}

%\todo{de eliminat partea cu vortex loops?}
\paragraph{Non-degenerate pairing for vortex loops.}
Under the decomposition in the orthogonal base $\{f',{\bf n}\}$ along the loop parametrized by $f$, 
the tangent space to the manifold of embeddings can be identified with the Cartesian product
\begin{equation}\label{ctc2}
T_f\Emb(S^1,\RR^2) \cong C^\infty(C) \times C^\infty(C),\quad u_f=(\rh\o f)({\bf n}\o f)+(\la\o f) f'.
\end{equation}
We can write now the symplectic form \eqref{omf} at $f$ in the canonical form
$\Om_f((\rh_1,\la_1),(\rh_2,\la_2))=\langle \rho_2,\lambda_1 \rangle - \langle \rho_1,\lambda_2 \rangle$,
for the $L^2$ scalar product with respect to the  volume form $\mu_C$ induced on the curve by the Euclidean metric:
\begin{equation}\label{l2}
\langle\rh,\la\rangle=\frac{\w}{2\pi}\int_C\la\rh\mu_C.
\end{equation}
The identity $f^*\mu_C=\om({\bf n}\o f,f')dt$ is used here.

The infinitesimal generator for the $\Rot(S^1)$ action $f\mapsto f'$ corresponds to the pair $(0,1)$ under the decomposition \eqref{ctc2} at each $f$.
Thus we further identify
\[
T_{[f]}(\Emb_a(S^1,\RR^2)/\Rot(S^1))\cong C_0^\oo(C)\x (C^\oo(C)/\RR)\cong C_0^\oo(C)\x dC^\oo(C),
\]
where $C_0^\oo(C)$ denotes the subspace of zero integral functions with respect to the induced volume form $\mu_C$.
The reduced symplectic form $\Om_a^{\w}$ at $[f]$ takes the canonical form  (as in \cite{GBV})
\begin{equation}\label{omaw}
(\Om_a^{\w})_{[f]}((\rh_1,d\la_1),(\rh_2,d\la_2))=\langle \rho_2,d\lambda_1 \rangle - \langle \rho_1,d\lambda_2 \rangle,
\end{equation}
%he $L^2$ scalar product \eqref{l2} induces a
for the non-degenerate pairing induced by \eqref{l2} between zero-integral functions and exact 1-forms 
$\langle\rh,d\la\rangle=\langle\rh,\la\rangle$.

\paragraph{Non-degenerate pairing for pointed vortex loops.}
We define another non-degenerate pairing of $C_0^\oo(C)$, this time with $C^\oo(C)$, which brings the symplectic form $\Om^{\Ga\bar\w}$
from \eqref{omag} 
at $f\in \Emb_a(S^1,\RR^2)$ in the canonical form \eqref{pair}.

Let $t_1, \dots,t_k$ be consecutive points on the circle $S^1=\RR/2\pi\ZZ$ (\ie $0\le t_1\le\dots\le t_k<2\pi$).
Let $c,c_1,\dots,c_k$ be non-zero real numbers.
% and let $c>0$.

\begin{lemma}\label{app}
The pairing $\langle\langle \ ,\  \rangle\rangle$ between the space of smooth functions on $S^1$ and its subspace of zero integral functions 
$C_0^\oo(S^1)=\{\rh\in C^\oo(S^1):\int_{S^1}\rh(t)dt=0\}$,
\[
\langle\langle \ ,\  \rangle\rangle: C_0^\infty(S^1) \times C^\infty(S^1) \rightarrow \RR, \quad
\langle\langle \rho,\lambda \rangle\rangle= 
c\int_{S^1}\rh(t)\la(t)dt + \sum_{i=1}^kc_i\rh(t_i)\la(t_i),
\]
is non-degenerate. 
\end{lemma}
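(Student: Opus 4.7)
The plan is to verify non-degeneracy separately in each slot by explicitly constructing test functions using bump functions localized away from or near the marked points $t_1,\dots,t_k$.

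For the first slot, suppose $\rho\in C_0^\infty(S^1)$ is not identically zero. Since $\rho$ is smooth and nonzero, it is nonzero on some open set $U\subset S^1$. As the set $\{t_1,\dots,t_k\}$ is finite, the open set $U\setminus\{t_1,\dots,t_k\}$ is nonempty, so I can pick $t^*\in U$ different from every $t_i$ with $\rho(t^*)\neq 0$. Choose a smooth non-negative bump function $\lambda$ supported in a small neighborhood of $t^*$ that avoids all $t_i$ and on which $\rho$ keeps a constant sign. Then $\lambda(t_i)=0$ for every $i$, so the discrete terms vanish and $\langle\langle\rho,\lambda\rangle\rangle = c\int_{S^1}\rho\lambda\,dt\neq 0$ by construction.

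For the second slot, suppose $\lambda\in C^\infty(S^1)$ is not identically zero; I split into two cases. If $\lambda$ is non-constant, pick two points $s_1,s_2\in S^1$ with $\lambda(s_1)\neq\lambda(s_2)$, both distinct from all $t_i$. Take narrow non-negative bump functions $\varphi_1,\varphi_2$ of integral one supported near $s_1,s_2$ respectively, with supports disjoint from $\{t_1,\dots,t_k\}$. Set $\rho:=\varphi_1-\varphi_2\in C_0^\infty(S^1)$; then the discrete terms vanish and $\langle\langle\rho,\lambda\rangle\rangle = c\bigl(\int\varphi_1\lambda\,dt - \int\varphi_2\lambda\,dt\bigr)$, which tends to $c(\lambda(s_1)-\lambda(s_2))\neq 0$ as the supports shrink, hence is nonzero for small enough bumps. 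If instead $\lambda\equiv \lambda_0$ is a nonzero constant, then the integral term contributes $c\lambda_0\int\rho\,dt=0$ for any $\rho\in C_0^\infty(S^1)$, and the pairing reduces to $\lambda_0\sum_{i=1}^k c_i\rho(t_i)$. Construct $\rho$ as follows: take a bump $\psi_1$ equal to $1$ at $t_1$ and supported in a neighborhood of $t_1$ disjoint from $t_2,\dots,t_k$, and subtract a bump $\psi_0$ of total integral $\int\psi_1\,dt$ supported in an open set disjoint from all $t_i$. Then $\rho:=\psi_1-\psi_0\in C_0^\infty(S^1)$ satisfies $\rho(t_1)=1$ and $\rho(t_i)=0$ for $i\geq 2$, giving $\langle\langle\rho,\lambda\rangle\rangle = c_1\lambda_0\neq 0$.

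The only mild obstacle is the constant-$\lambda$ case, where the zero-integral constraint on $\rho$ prevents the bulk term from contributing and one must use the discrete terms; this is handled by localizing a bump at a single $t_i$ while restoring the mean-zero condition with a correction bump placed in the complement of $\{t_1,\dots,t_k\}$. All other cases reduce to standard bump-function localization away from the finite set of marked points, which is possible precisely because $\dim S^1=1$ allows arbitrary prescription of values at finitely many points by smooth functions. This completes the verification that $\langle\langle\,\cdot\,,\,\cdot\,\rangle\rangle$ is non-degenerate in both arguments.
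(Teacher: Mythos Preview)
Your proof is correct and follows essentially the same approach as the paper's own argument: bump-function localization away from the marked points for the left slot, a ``double bump'' zero-integral test function for the non-constant case on the right, and a bump at $t_1$ with a mean-zero correction for the constant case, using $c_1\neq 0$. The only cosmetic difference is that the paper phrases each step as ``assume the element annihilates everything and derive that it is zero,'' whereas you work with the contrapositive, explicitly exhibiting a test function making the pairing nonzero.
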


\begin{proof}
Let $\rho_0 \in C_0^\infty(S^1)$ such that $\langle\langle \rho_0, \lambda \rangle\rangle = 0$, $\forall \lambda \in C^\infty(S^1)$,
which means that
\begin{equation}\label{rzpair}
c\int_{S^1}\rh_0(t)\la(t)dt + \sum_{i=1}^kc_i\rh_0(t_i)\la(t_i) = 0.
\end{equation}
%n particular any function  $\lambda$ in $C^\infty(S^1)$ such that $\lambda(t_i) = 0$, $\forall i$, satisfies
%\begin{equation}\label{rhozero}\int_{S^1}\rho_0(t)\lambda(t) dt = 0.\end{equation}
Assume by contradiction that there exists $t \notin \{t_1, ..., t_k\}$ such that $\rho_0(t)\neq 0$. 
Choosing a non-negative bump function $\lambda \in C^\infty(S^1)$, supported in a neighborhood of $t$ that doesn't contain any  of the $t_i$'s 
and where $\rho_0$ doesn't change sign, we get $\int_{S_1}\rho_0(t)\lambda(t) dt \neq 0$,
which contradicts equation $\eqref{rzpair}$. 
%The assumption was false. 
We conclude that $\rho_0$ vanishes on $S^1\setminus\{t_1,...,t_k\}$ and, because $\rho_0$ is a continuous function, this implies $\rho_0 = 0$. 

Now let $\lambda_0 \in C^\infty(S^1)$ such that $\langle\langle \rho, \lambda_0 \rangle\rangle  = 0$, $\forall \rho \in C_0^\infty(S^1)$,
which means that
\begin{equation}\label{lzpair}
c\int_{S^1}\rh(t)\la_0(t)dt + \sum_{i=1}^kc_i\rh(t_i)\la_0(t_i) = 0.
\end{equation}
%Let $\rho$ be an arbitrary function in $C_0^\infty(S^1)$ with $\rho(t_i) = 0$, $\forall i$. Then: 
%\begin{equation}\label{lamzero}\int_{S^1}\rho(t)\lambda_0(t) dt = 0.\end{equation}
Assume by contradiction that $\lambda_0$ is not constant on $S^1\setminus \{t_1,...,t_k\}$, 
hence there exist $t^\prime \neq t^{\prime\prime}$ distinct from $t_1,\dots,t_k$, such that $\lambda_0(t^\prime) \neq \lambda_0(t^{\prime\prime})$. 
Choose a "double bump" zero integral function $\rho \in C_0^\infty(S^1)$ supported in the union of small neighborhoods of $t^\prime$ and $t^{\prime\prime}$ that don't contain any of the $t_i$'s. 
By shrinking the support of $\rho$ around the points $t'$ and $t''$, we can achieve 
\[
\int_{S^1}\rho(t)\lambda_0(t) dt \rightarrow \lambda_0(t^\prime) - \lambda_0(t^{\prime\prime}) \neq 0,
\]
which contradicts relation $\eqref{lzpair}$. 
%The assumption was false. 
We conclude that $\lambda_0$ is constant on $S^1\setminus \{t_1,...,t_k\}$. Since $\lambda_0$ is a continuous function, this means $\lambda_0$ is constant on $S^1$. 

Going back to equation $\eqref{lzpair}$, we get
$\la_0\sum_{i=1}^kc_i\rho(t_i) = 0$ for all zero integral functions $\rh$.
Pick $\rho\in C_0^\infty(S^1)$ with $\rho(t_1) \neq 0$ and $\rho(t_i) = 0$ for all the other $t_i$. 
From $\la_0c_1\rho(t_1) = 0 $ and $c_1\ne 0$ we conclude that $\la_0=0$.
\end{proof}

\section{A transitivity result}

We prove transitivity of the $\Ham_c(\RR^2)$ action on $\Emb_a(S^1,\RR^2)$
by applying the following lemma (see the appendix in \cite{HVpre}):

\begin{lemma}\label{smoothsec}
Let $G$ be a regular Lie group acting on a smooth manifold $M$ with infinitesimal action $\ze:\g\to\X(M)$. 
Suppose that every point $x_0$ in $M$ admits an open neighborhood $U$ and a smooth map $\sigma:TM|_U\rightarrow \frak g$ such that
\begin{equation}\label{zs}
\zeta_{\sigma(X)}(x)=X
\end{equation}
for all $x\in U$ and $X\in T_xM$. Then the $G$ action on $M$ admits local smooth sections.
\end{lemma}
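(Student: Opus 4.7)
The plan is to construct, near any $x_0 \in M$, a smooth map $s: V \to G$ from an open neighborhood $V$ of $x_0$ satisfying $s(x_0)=e$ and $s(x)\cdot x_0 = x$, by integrating a smoothly varying family of curves in $\g$ and invoking the regularity axiom for $G$.

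First I would apply the hypothesis at $x_0$ to obtain an open neighborhood $U$ of $x_0$ and a smooth map $\sigma: TM|_U \to \g$ with $\zeta_{\sigma(X)}(x) = X$ for all $x \in U$ and $X \in T_xM$. Using a chart around $x_0$, I would shrink to a convex neighborhood $V \subset U$ small enough that for every $x \in V$ the straight-line path $\gamma_x: [0,1] \to U$ from $x_0$ to $x$ lies entirely inside $U$. The map $(x,t) \mapsto \gamma_x(t)$ is smooth, hence $\xi_x(t) := \sigma(\dot\gamma_x(t))$ defines a smooth family $V \times [0,1] \to \g$.

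Next, I would invoke the regularity of $G$ to integrate $\xi_x$: there is a unique smooth curve $g_x: [0,1] \to G$ with $g_x(0) = e$ whose evolution is generated by $\xi_x$, and this evolution depends smoothly on the parameter $x$. Setting $s(x) := g_x(1)$ gives a smooth map $s: V \to G$ with $s(x_0) = e$. To verify $s(x)\cdot x_0 = x$, I would compare the two curves $\alpha_x(t) := g_x(t) \cdot x_0$ and $\gamma_x(t)$ in $M$: both start at $x_0$, and both satisfy the same ODE $\dot y(t) = \zeta_{\xi_x(t)}(y(t))$ on $M$---for $\alpha_x$ this follows from the chain rule applied to the action, and for $\gamma_x$ from the defining property of $\sigma$, which applies because $\gamma_x(t)$ remains in $U$ throughout. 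Uniqueness of solutions for this finite-dimensional ODE forces $\alpha_x \equiv \gamma_x$, so $s(x)\cdot x_0 = \alpha_x(1) = \gamma_x(1) = x$.

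The main obstacle is technical rather than conceptual: because $G$ may be infinite-dimensional, smooth dependence of the evolution $g_x$ on the parameter $x$ is not automatic from classical ODE theory but is precisely the content of the regularity axiom. A secondary issue is choosing $V$ carefully via a chart so that the paths $\gamma_x$ do not leave $U$, which is needed in order to apply the property of $\sigma$ along the entire path and to match the two ODEs.
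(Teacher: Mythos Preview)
The paper does not actually prove this lemma; it is quoted from the appendix of \cite{HVpre}. So there is no in-paper proof to compare against, and your argument is the standard one and is essentially correct. Two technical points deserve attention, though.

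First, your appeal to ``uniqueness of solutions for this finite-dimensional ODE'' is not justified: the lemma is stated for a smooth manifold $M$ with no finiteness assumption, and in the paper's application $M=\Emb_a(S^1,\RR^2)$ is a Fr\'echet manifold, where ODE uniqueness is not automatic. The clean fix is to avoid invoking uniqueness on $M$ altogether and instead differentiate $z(t):=g_x(t)^{-1}\cdot\gamma_x(t)$. Using $\dot\gamma_x(t)=\zeta_{\xi_x(t)}(\gamma_x(t))$ and the fact that the right logarithmic derivative of $g_x$ equals $\xi_x$, the chain rule for the action gives $T(g_x(t)\cdot)\,\dot z(t)=0$; since $g_x(t)$ acts by a diffeomorphism, $\dot z=0$ and hence $z\equiv x_0$, i.e.\ $g_x(t)\cdot x_0=\gamma_x(t)$. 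This uses only smoothness of the action and regularity of $G$, not ODE theory on $M$.

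Second, the claim $s(x_0)=e$ is not automatic: the hypothesis does not force $\sigma(0_{x_0})=0$, so the constant path $\gamma_{x_0}$ yields $\xi_{x_0}(t)=\sigma(0_{x_0})$, which may be nonzero, and then $g_{x_0}(1)$ need only lie in the stabilizer of $x_0$. This does not affect the conclusion (a local smooth section of the orbit map $g\mapsto g\cdot x_0$ only requires $s(x)\cdot x_0=x$), but if you want $s(x_0)=e$ you should first replace $\sigma$ by $\tilde\sigma(X):=\sigma(X)-\sigma(0_x)$, which still satisfies $\zeta_{\tilde\sigma(X)}(x)=X$ and now vanishes on the zero section.
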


\begin{proposition} \label{transitivity}
The action of the group $\Ham_c(\RR^2)$ on $\Emb_a(S^1, \RR^2)$, is transitive. 
\end{proposition}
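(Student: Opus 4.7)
The plan is to apply Lemma~\ref{smoothsec} with $M=\Emb_a(S^1,\RR^2)$ and $G=\Ham_c(\RR^2)$: producing a smooth $\sigma$ satisfying \eqref{zs} yields local smooth sections of the action, hence open orbits. Combined with connectedness of $\Emb_a(S^1,\RR^2)$, this forces a single orbit and therefore transitivity.

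\textbf{Constructing $\sigma$.} I fix $f_0\in \Emb_a(S^1,\RR^2)$, set $C_0=f_0(S^1)$, and choose a tubular neighborhood $N$ of $C_0$ with smooth Fermi-type coordinates $(s,r)$ in which $C_0=\{r=0\}$ and $r$ is signed Euclidean normal distance. For $f$ in a sufficiently small open neighborhood $U$ of $f_0$, the image $C=f(S^1)$ lies in $N$. Any tangent vector $u_f\in T_f\Emb_a(S^1,\RR^2)$ decomposes along $C$ as $u_f=\rho\,\mathbf{n}+\lambda\,\mathbf{t}$ in the orthonormal frame, and tangency to $\Emb_a$ forces $\int_C\rho\,\mu_C=0$. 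The function $h_0$ on $C$ obtained by arclength integration of $\rho$ is therefore single-valued and smooth. I extend $h_0$ to $N$ by the first-order formula $h(s,r)=h_0(s)\pm r\lambda(s)$, multiply by a fixed compactly supported cutoff $\chi(r)$ satisfying $\chi(0)=1$ and $\chi'(0)=0$, and extend by zero outside $N$. A direct computation of the tangential and normal derivatives of the resulting $h\in C_c^\oo(\RR^2)$ along $C$ shows $X_h\circ f=u_f$, so $\sigma(f,u_f):=h$ meets the hypothesis of Lemma~\ref{smoothsec}.

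\textbf{Openness and conclusion.} Lemma~\ref{smoothsec} then provides local smooth sections of the orbit map, so every $\Ham_c(\RR^2)$-orbit in $\Emb_a(S^1,\RR^2)$ is open. The space $\Emb_a(S^1,\RR^2)$ is connected: given $f_1,f_2\in\Emb_a(S^1,\RR^2)$, the ambient space $\Emb^+(S^1,\RR^2)$ of positively oriented embeddings is path connected (simple closed curves in the plane all have turning number $+1$), and any joining path can be modified by composition with a time-dependent radial dilation about a suitable interior point to keep the enclosed area equal to $a$ throughout. A partition of a connected space into disjoint open subsets consists of a single subset, so there is a unique orbit and the action is transitive.

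\textbf{Main obstacle.} The technically delicate point is verifying that $\sigma(f,u_f)$ depends smoothly on $f$ in addition to $u_f$. The cleanest way is to parametrize each nearby curve $C=f(S^1)$ as a graph over $C_0$ in the fixed Fermi coordinates on $N$; then the normal-tangential splitting of $u_f$, the arclength integration producing $h_0$, and multiplication by the fixed cutoff $\chi$ are all smooth operations in the $C^\oo$ topology. The subsequent passage from open orbits to transitivity via connectedness is then standard.
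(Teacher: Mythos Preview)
Your approach is the paper's: extend a primitive of the normal component of $u_f$ through a tubular neighborhood to produce $h$ with $X_h\circ f=u_f$, then invoke Lemma~\ref{smoothsec}. You also spell out the connectedness of $\Emb_a(S^1,\RR^2)$ needed to pass from open orbits to a single orbit, which the paper leaves implicit.

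One point needs repair. Your formula $h(s,r)=h_0(s)\pm r\lambda(s)$ is written in the fixed Fermi chart of $C_0$, so for $f\ne f_0$ the curve $C$ sits at $r=g(s)\ne 0$ and the $1$-jet of $h$ along $C$ no longer reproduces $u_f$: both the offset $r\mapsto r-g(s)$ and the Jacobian of $\omega$ in these coordinates interfere. Your ``main obstacle'' paragraph diagnoses the difficulty as smoothness in $f$, but the actual issue is correctness of the formula away from $f_0$; Lemma~\ref{smoothsec} requires $\zeta_{\sigma(X)}(x)=X$ for \emph{all} $x\in U$, not just $x=f_0$. The remedy, which is what the paper does, is to build the tubular neighborhood around $C=f(S^1)$ itself via the normal exponential map; that construction depends smoothly on $f$, and then smoothness of $\sigma$ follows automatically.
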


\begin{proof} 
%Using lemma $\eqref{smoothsec}$, we will prove a stronger property, i.e the action admits local smooth sections. 
Firstly, we prove that the action is inifinitesimally transitive, which means proving that given any $u_f$ in 
\[
T_f\Emb_a(S^1, \RR^2) = \left\{u_f: S^1 \rightarrow T\RR^2 : \int_{S^1}f^*i_{u_f}\omega = 0 \right\},
\]
there is $h \in C_c^\infty(\RR^2)$, such that $u_f = \zeta_{X_h}(f) = X_h \circ f$,.
% $X_h\in \frak X_{c,\text{ham}}(\RR^2)$. 
Let $\la\in C^\oo(S^1)$ with the property  $f^*i_{{u_f}}\omega = d\la$. 
We extend it to a function $h_1 \in C_c^\infty(\RR^2)$, which means  that $\la = h_1 \circ f$. 

We define a 1-form along the curve $f(S^1)\subset \RR^2$ by
\begin{equation}\label{gam}
\gamma = dh_1\circ f - i_{u_f}\omega \in \Gamma(f^*T^*M).
\end{equation}
It vanishes on vectors tangent to the curve, hence it can be seen as a function on the normal bundle $Tf(S^1)^\bot$, with differential along the zero section $f(S^1)$ equal to the 1-form itself. Thus, on a tubular neighborhood of $f(S^1)$ in $\RR^2$, cutting $\ga$ with a suitable bump function, 
we get $h_2\in C_c^\infty(\RR^2)$ such that $\gamma = dh_2\circ f$. It follows by \eqref{gam} that 
$dh_2\circ f = dh_1\circ f - i_{u_f}\omega$. This implies $i_{u_f}\omega = i_{{X_h}\circ f}\omega$
for $h = h_1 - h_2$. 
We conclude that $u_f = X_h\circ f = \zeta_{X_h}(f)$ is the infinitesimal generator at $f$ for the Hamiltonian vector field $X_h$. 

Using tubular neighborhoods constructed with the help of a Riemannian metric, we see that the function $h$ above 
may be chosen to depend smoothly on $f$ and $u_f$, for $f$ in a sufficiently small open neighborhood of a fixed embedding $f_0\in \Emb_a(S^1, \RR^2)$. 
Thus $\si(u_f)=X_h$ defines a smooth map that satisfies \eqref{zs}. 
With Lemma $\eqref{smoothsec}$ we obtain that the action of $\text{Ham}_c(\RR^2)$ on $\Emb_a(S^1, \RR^2)$ admits local smooth sections,
hence it is transitive. 
\end{proof}

{
\footnotesize

\bibliographystyle{new}
\addcontentsline{toc}{section}{References}

}
\end{document}